\theoremstyle{plain}
\newtheorem{thm}{Theorem}[section]
\newtheorem{lem}[thm]{Lemma}
\newtheorem{prop}[thm]{Proposition}
\newtheorem{cor}[thm]{Corollary}
\newtheorem{question}{Question}
\theoremstyle{definition}
\newtheorem{defn}[thm]{Definition}
\newtheorem{nota}[thm]{Notation}
\theoremstyle{remark}
\newtheorem{rem}[thm]{Remark}
\newtheorem{exmp}[thm]{Example}
\def\N{{\mathbf N}}
\def\P{{\mathbf P}}
\def\L{{\mathcal L}}
\newcommand{\F}{\mathbf{F}}
\newcommand{\res}{\textrm{res}}
\newcommand{\supp}{\textrm{Supp}}
\begin{document}

\title[Differential Approach for Duals of AG codes on Surfaces]{Differential Approach for the Study of Duals of Algebraic-Geometric Codes on Surfaces}
\author{\sc Alain Couvreur}

\address{Alain Couvreur\\
INRIA Saclay, Projet \textsc{Tanc}\\
École polytechnique\\
Laboratoire d'informatique LIX, UMR 7161\\
91128 Palaiseau Cedex,
France}
\email{alain.couvreur@inria.fr}

\maketitle

\thispagestyle{empty}

\begin{abstract}
The purpose of the present article is the study of duals of functional codes on algebraic surfaces.
We give a direct geometrical description of them, using differentials.
Even if this geometrical description is less trivial, it can be regarded as a na\-tural extension to surfaces of the result asserting that the dual of a functional code $C_L (D,G)$ on a curve is the differential code $C_{\Omega}(D,G)$ .
We study the parameters of such codes and state a lower bound for their minimum distance.
Using this bound, one can study some examples of codes on surfaces, and in particular surfaces with Picard number $1$ like elliptic quadrics or some particular cubic surfaces. 
The parameters of some of the studied codes reach those of the best known codes up to now.
\end{abstract}

\section*{Introduction}
Given a variety $X$ over a finite field, a divisor $G$ on $X$ and a family $P_1, \ldots , P_n$ of rational points of $X$, one can construct the \textit{functional code} $C_L (X, \Delta, G)$, where $\Delta$ denotes the formal sum $P_1+ \cdots +P_n$.
This construction, due to Manin in \cite{manin}, is obtained by evaluating the global sections of the sheaf $\L (G)$ at the points $P_1, \ldots , P_n$.
Basically, the aim of this paper is to get information on the dual $C_L (X, \Delta, G)^{\bot}$ of such a functional code.

Most of the literature on algebraic--geometric codes deals with the case when $X$ is a curve.
In this situation, the dual code $C_L(X, \Delta, G)^{\bot}$ is equal to the \textit{differential} code $C_{\Omega}(X, \Delta, G)$ whose construction, due to Goppa in \cite{goppa}, involves residues of differential forms on $X$.
Moreover, on curves, it is also well-known that a differential code $C_{\Omega}(X, \Delta, G)$ is equal to a functional code $C_L (X, \Delta, G')$, where $G'$ is a divisor depending on $G, \Delta$ and the canonical class of $X$.
Therefore,  the study of duals of functional codes on curves is equivalent to the study of functional codes.

For higher--dimensional varieties, the geometric problems raised by coding theory become much more difficult and hence only little is known.
Most of the literature on the topic concerns the estimation of the parameters and in particular the minimum distance of functional codes on particular surfaces.
For instance, codes on quadric varieties are studied in \cite{aubryquad} and \cite{fred}, codes on surfaces with Picard number $1$ are studied in \cite{zarzar}
(see the survey chapter of J.B. Little in \cite{bouquindiego} for a detailed survey on the topic).
Concerning the dual of such a functional code, almost nothing is known.
In \cite{oim}, a differential construction for codes on surfaces is given, which turns out to be a natural extension to surfaces of Goppa's construction on curves (see \cite{goppa}).
It is proved in the same article that such a differential code is contained in the dual of a functional code, but that the converse inclusion is false in general.

The aim of the present paper is to get general information on duals of functional codes on surfaces.
For that, we try to answer two questions asked in Section \ref{stat}.
The first one (which was actually raised in the end of \cite{oim}) is to find a direct geometrical description of such a code using differentials.
The second one is to get information on the parameters of such codes.
As an answer for the first question, we state and prove Theorem \ref{1real}. This statement asserts that even if the dual of a functional code on a surface is not differential in general, it is always a sum of differential codes on this surface.
Afterwards, we focus our study on the estimate of the parameters of such a code and state results yielding a lower bound for its minimum distance.
When the surface is the projective plane, these results yield the exact minimum distance which is already known in this case since the codes are Reed--Muller (see \cite{delsarte} Theorem 2.6.1).
In addition, these results (Theorems \ref{Bound} and \ref{impr}) are easy to handle provided the Picard number of the surface is small. 
It is worth noting that the works on parameters of codes on surfaces point out that surfaces with Picard number $1$ yield good functional codes.
This principle was first observed by Zarzar in \cite{zarzar} and is confirmed by some other works on the topic.
For instance, one sees in \cite{fred} that elliptic quadrics (which have Picard number $1$) give much better codes than hyperbolic ones (which have Picard number $2$).
It turns out that this principle asserting that surfaces with small Picard number yield good functional codes seems to hold for duals of functional codes. Two examples of surfaces with Picard number $1$ are studied (namely, elliptic quadrics and cubic which do not contain rational lines). The minimum distance of some dual codes obtained from these examples turn out to reach the best known minimum distance up to now compared to their length and dimension. 

\subsection*{Contents}
Notations are given in Section \ref{context}. They are followed by the recall of some prerequisites in Section \ref{prere}.
The aims of the present article are summarised in Section \ref{stat}, where Questions \ref{qrealsum} and \ref{howtobound} are raised.
Section \ref{funcconst} is devoted to the proof of some statements which are important in what follows.
In particular, Proposition \ref{keytool}, which is the key tool for the proof of the two main results (Theorems \ref{1real} and \ref{Bound}), is proved in this section.
Section \ref{secreal} is devoted to the answer to Question \ref{qrealsum}.
Theorem \ref{1real} is proved in this section and asserts that, even if the dual of a functional code on a surface is not in general a differential code on this surface, it is always a sum of differential codes on this surface.
Section \ref{secmind} is devoted to the answer to Question \ref{howtobound}, that is the study of the minimum distance of the dual of a functional code on a surface.
Two results are stated: Theorem \ref{Bound}, yielding a lower bound for the minimum distance of some of these codes, and Theorem \ref{impr}, which improves the bound given by Theorem \ref{Bound} in some situations.
Some applications of Theorems \ref{Bound} and \ref{impr} are studied in Section \ref{ex1}, and lower bounds for the minimum distance are given for explicit examples. The parameters of these codes are compared with those of the best known codes up to now (found in \cite{codetables} and \cite{minT}).

\section{Notations}\label{context}

\subsection{About coding theory}
An error--correcting code is a vector subspace $C$ of $\F_q^n$ for some positive integer $n$.
The integer $n$ is called the \textit{length} of $C$.
Elements of $C$ are called \textit{codewords}.
The \textit{Hamming weight} $w(c)$ of a vector $c\in \F_q^n$ is the number of its nonzero coordinates. 
The \textit{Hamming distance} $d(x,y)$ between two vectors $x,y \in \F_q^n$ is $d(x,y):=w(x-y)$.
Given a code $C \in \F_q^n$, the \textit{minimum distance} $d$ of $C$ is the smallest Hamming distance between two distinct elements of $C$.
A code is said to have para\-meters $[n,k,d]$ if its length is $n$, its dimension over $\F_q$ is $k$ and its minimum distance is $d$.

On $\F_q^n$, we consider the canonical pairing $\langle . ,. \rangle$ defined by $\langle x,y \rangle:=\sum_{i=1}^n x_iy_i$.
Given a code $C \subset \F_q^n$, its orthogonal space $C^{\bot}$ for this pairing is called \textit{dual code} of $C$.

\subsection{About divisors and sheaves}
Given a sheaf $\mathcal{F}$ on a variety $X$, we denote by $\mathcal{F}_P$ its stalk at a point $P\in X$.
Linear equivalence between divisors is denoted by $D \sim D'$.
Given a map $\nu: Y\hookrightarrow X$ between two varieties and a divisor $G$ on $X$, then, for convenience's sake, the pullback $\nu^{\star}G$ is denoted by $G^{\star}$ whenever there is no possible confusion on $\nu$.
Given a projective variety $V$, we denote by $H_V$ the hyperplane section of $V$ and by $K_V$ its canonical class.

\subsection{About intersections}
Let $S$ be an algebraic surface, $P$ be a smooth point of $S$ and $X,Y$ be two curves embedded in $S$.
If $X$ and $Y$ have no common irreducible component in a neighbourhood of $P$, we denote by $m_P(X,Y)$ the intersection multiplicity of $X$ and $Y$ at $P$.
The notion of intersection multiplicity extends by linearity to divisors on $S$.
Finally, the intersection product of two divisor classes $D$ and $D'$ is denoted by $D.D'$.

\subsection{Base field extensions} Let $X$ be a variety defined over $\F_q$. We denote by $\overline{X}$ the variety $\overline{X}:=X\times_{\F_q} \overline{\F}_q$. 
In the same way, let $\mathcal{F}$ be a sheaf on $X$, then we denote by $\overline{\mathcal{F}}$ the pullback of $\mathcal{F}$ on $\overline{X}$.

\section{Prerequisites}\label{prere}

In this section we recall some facts about residues and differential forms on surfaces.
Afterwards, we give some necessary prerequisites on algebraic--geometric codes on surfaces.

\subsection{Residues of differential $2$-forms on algebraic surfaces}\label{ressurf}
For further details on the definitions and the statements given in the present subsection, see \cite{oim} and \cite{thesebibi}. Some results on residues can also be found in \cite{parshin}.

\subsubsection{Residues in codimension $1$}\label{1res}
Let $C$ be an irreducible curve embedded in a smooth surface $S$ over an arbitrary field $k$.
If $\omega$ is a differential $2$--form on $S$ with valuation $\geq -1$ along $C$, then one can define a $1$--form on $C$ denoted by $\res^1_{C}(\omega)$.
See \cite{oim} Definition 1.3.

\subsubsection{Residues in codimension $2$}\label{subres}
Let $C$ be an irreducible curve embedded in a surface $S$ and $P$ be a rational point of $S$.
Given a $2$-form $\omega$ on $S$, one defines a residue at $P$ along $C$ of $\omega$ denoted by $\res^2_{C,P}(\omega)$ (see \cite{oim} Definition 3.1 and Theorem 3.6).
By convention, the map $\res^2_{C,P}$ is identically zero when $P\notin C$.
This notion generalises to any arbitrary reduced curve $C$.
In this situation, $\res^2_{C,P}(\omega)$ is the sum of the residues of $\omega$ at $P$ along each irreducible component of $C$.
Finally, if $D$ is a divisor on $S$, we denote by $\res^2_{D,P}$ the residue at $P$ along the \textbf{reduced support} of $D$.
That is
$
\res^2_{D,P}:=\res^2_{\supp (D),P}
$.
The following proposition summarises the properties of $2$--residues we need in what follows.

\begin{prop}\label{2res}
  Let $S$ be a smooth surface over an arbitrary field, $D$ be a divisor on $S$ and $P$ be a rational point of $S$.
Let $\omega$ be a rational $2$--form on $S$.
\begin{enumerate}[(i)]
\item\label{2res1} If in a neighbourhood of $P$, the pole locus of $\omega$ has no common component with $\supp (D)$, then $\res^2_{D,P}(\omega)=0$.
\item\label{2res2} If  in a neighbourhood of $P$, the pole locus of $\omega$ is entirely contained in $\supp (D)$, then $\res^2_{D,P}(\omega)=0$.
\end{enumerate}
In addition, let $C\subset S$ be a smooth curve at $P$.
\begin{enumerate}[(i)]
  \setcounter{enumi}{2}
\item\label{2res3}  If $\omega$ has valuation $\geq -1$ along $C$, then $\res^2_{C,P}(\omega)=\res_P (\res^1_C (\omega))$.
\end{enumerate}
\end{prop}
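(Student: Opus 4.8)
The plan is to reduce all three statements to the one-variable residue calculus on curves, using the local description of the $2$-residue from \cite{oim}. I would establish (iii) first, since (i) and (ii) build on it. For (iii), the cleanest route is to quote \cite{oim}: for $C$ smooth at $P$ and $\omega$ of valuation $\geq -1$ along $C$, the identity $\res^2_{C,P}(\omega)=\res_P(\res^1_C(\omega))$ is established there (see Definition 3.1 and Theorem 3.6 of \cite{oim}). If one wants to see it by hand, pick local coordinates $(x,y)$ at $P$ with $C=\{x=0\}$ and write $\omega=\frac1x\,a(x,y)\,dx\w dy+\eta$ with $a$ and $\eta$ regular along $C$; then $\res^1_C(\omega)$ is, up to sign, the restriction $a(0,y)\,dy$, and applying $\res_P$ to it extracts the coefficient of $y^{-1}$ in $a(0,y)$, which one checks directly is also $\res^2_{C,P}(\omega)$.

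For (i), I would write the reduced support of $D$ as a sum $C_1+\cdots+C_r$ of distinct irreducible curves passing through $P$, so that $\res^2_{D,P}(\omega)=\sum_{i}\res^2_{C_i,P}(\omega)$. By hypothesis no $C_i$ is a component of the pole locus of $\omega$ near $P$, hence $\omega$ has non-negative valuation along each $C_i$. The observation to record is that $\res^2_{C,P}$ depends only on the polar part of $\omega$ along $C$, so it vanishes on forms regular along $C$: when $C_i$ is smooth at $P$ this follows from (iii) together with the fact that $\res^1_{C_i}$ annihilates forms regular along $C_i$ (immediate from \cite{oim} Definition 1.3), and for arbitrary $C_i$ it is immediate from the construction of $\res^2_{C_i,P}$ in \cite{oim}, which is supported on the polar locus. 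Summing over $i$ gives $\res^2_{D,P}(\omega)=0$.

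For (ii), I would again split $\res^2_{D,P}(\omega)=\sum_i\res^2_{C_i,P}(\omega)$ over the irreducible components $C_i$ of the reduced support of $D$ through $P$. Those $C_i$ that are not components of the pole locus contribute $0$, by the argument of (i). By hypothesis every irreducible curve through $P$ along which $\omega$ has a pole is one of the remaining $C_i$; adding in the (vanishing) contributions of all the other curves through $P$, the remaining sum equals the total $2$-residue $\sum_{C\ni P}\res^2_{C,P}(\omega)$, a finite sum since only the finitely many polar curves through $P$ contribute. I would then invoke the reciprocity law for $2$-residues, which asserts that this total residue vanishes; it is the surface analogue, localised at the point $P$, of the vanishing of the sum of the residues of a rational $1$-form on a complete curve, and it is available from \cite{oim} (compare the reciprocity laws in \cite{parshin}).

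The main obstacle is step (ii): it is the only point at which a genuinely two-dimensional input enters, namely the reciprocity law $\sum_{C\ni P}\res^2_{C,P}(\omega)=0$; the remaining steps are bookkeeping together with the classical one-variable residue theorem. If that reciprocity law were not already available from \cite{oim}, the real work would be to prove it, for instance by passing to the completed local ring at $P$ --- where the pole locus breaks up into finitely many formal branches --- and checking the cancellation branch by branch using (iii) and the residue theorem on each branch, or equivalently by blowing up $P$ and applying the $1$-dimensional residue theorem along the exceptional curve.
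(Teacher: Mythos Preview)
Your proposal is correct and matches the paper's approach. The paper's own proof is nothing but three citations to \cite{oim}: (i) is immediate from the definition of $2$--residues (\cite{oim} Definition 3.1), (ii) is \cite{oim} Theorem 6.3, and (iii) follows from \cite{oim} Definitions 1.4 and 3.1 with Remark 3.3. You have identified exactly the same inputs --- in particular, you correctly singled out (ii) as the one place where a genuine two-dimensional reciprocity law (your ``$\sum_{C\ni P}\res^2_{C,P}(\omega)=0$'') is needed, which is precisely the content of \cite{oim} Theorem 6.3 --- and the extra local-coordinate sketch you give for (iii) and the component-by-component unpacking for (i) are harmless elaborations of what the paper leaves implicit in the word ``definition''.
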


\begin{proof}
  The definition of $2$--residues (\cite{oim} Definition 3.1) gives (\ref{2res1}).
From \cite{oim} Theorem 6.3, we get (\ref{2res2}).
Finally (\ref{2res3}) is a consequence of \cite{oim} Definitions 1.4 and 3.1 together with Remark 3.3.
\end{proof}

\begin{rem}\label{machin1}
 Basically, Proposition \ref{2res} asserts that $\res^2_{D,P}(\omega)$ is nonzero if and only if in any neighbourhood of $P$, the support of $D$ contains at least one component of the pole locus of $\omega$ but does not contain entirely this pole locus.
It entails in particular that nonzero residues appear only at points $P$ at which two distinct poles of $\omega$ meet.
\end{rem}

\subsection{Algebraic--geometric codes on surfaces}

\subsubsection{Context}\label{contextcodes} Let $S$ be a smooth projective geometrically connected surface over a finite field $\F_q$, let $G$ be a divisor on $S$ and $P_1, \ldots, P_n$ be a family of rational points of $S$ avoiding the support of $G$. Denote by $\Delta$ the $0$--cycle $\Delta:=P_1+ \cdots +P_n$.

\subsubsection{Functional codes} Recall the definition, due to Manin in \cite{manin}, of the functional code associated to $G$ and $\Delta$. This code is defined to be the image of the evaluation map
$$
ev_{\Delta}:
\left\{
\begin{array}{ccc}
H^0(S, \L (G)) & \longrightarrow & \F_q^n \\
f & \longmapsto & (f(P_1), \ldots , f(P_n)).   
\end{array}
\right.
$$
It is denoted by $C_L (S,\Delta, G)$ or $C_L(\Delta, G)$ if there is no possible confusion on the involved variety. 

\subsubsection{Differential codes}\label{secDiff}
A differential construction of codes on surfaces is given in \cite{oim} 8.1.
Let $D_a, D_b$ be two divisors on $S$ whose supports have no common component, the differential code associated to $\Delta, D_a,D_b$ and $G$ is the image of the map
$$
res^2_{D_a,\Delta}:
\left\{
\begin{array}{ccc}
H^0(S, \Omega^2 (G-D_a-D_b)) & \longrightarrow & \F_q^n \\
\omega & \longmapsto & (\res^2_{D_a,P_1}(\omega), \ldots , \res^2_{D_a,P_n}(\omega)).   
\end{array}
\right.
$$ 
It is denoted by $C_{\Omega} (S,\Delta,D_a,D_b,G)$ or $C_{\Omega}(\Delta, D_a, D_b, G)$ when there is no possible confusion on the involved surface.

If there is no relation between the pair $(D_a, D_b)$ and $\Delta$, then there is no interesting relation between $C_L (S,\Delta,G)$ and $C_{\Omega} (S,\Delta,D_a,D_b,G)$.
This motivates the notion of $\Delta$--convenient pair of divisors.

\begin{defn}[$\Delta$--convenience, \cite{oim} Definition 8.3] 
A pair $(D_a, D_b)$ is said to be $\Delta$--convenient if
\begin{enumerate}
\item[$(i)$] the supports of $D_a$ and $D_b$ have no common irreducible component; 
\item[$(ii)$] for all $P\in \overline{S}$, the map $\res^2_{D_a,P}:\ \overline{\Omega^2(-D_a-D_b)}_P \rightarrow \overline{\F}_q$ is $\mathcal{O}_{\overline{S},P}$--linear;
\item[$(iii)$] this map is surjective for all $P\in \supp (\Delta)$ and zero elsewhere.  
\end{enumerate}
\end{defn}

\begin{rem}
Some examples and pictures illustrating this notion are given in \cite{thesebibi} II.3.4 and 5.
An explicit criterion for $\Delta$--convenience involving intersection multiplicities is given in \cite{oim} Proposition 8.6.
\end{rem}

\noindent In what follows, we also use a weaker definition called \textit{sub--$\Delta$--convenience}.

\begin{defn}[Sub--$\Delta$--convenience, \cite{thesebibi} III.2.1]\label{sub} A pair $(D_a,D_b)$ is said to be sub--$\Delta$--convenient if it is $\Delta'$--convenient for some $0\leq \Delta' \leq \Delta$. Equivalently, the pair satisfies the conditions $(i)$ and $(ii)$ of the previous definition together with 
  \begin{enumerate}
  \item[$(iii')$] for all $P\in \overline{S}\smallsetminus \supp (\Delta)$, the map $\res^2_{D_a, P}: \overline{\Omega^2(-D_a-D_b)}_P \rightarrow \overline{\F}_q$ is zero.
  \end{enumerate}
  \end{defn}




\section{Statement of the problems}\label{stat}

On a curve $X$ with a divisor $G$ and a sum of rational points $D$ (which is also a divisor), it is well-known that the dual of the functional code $C_L (X,D,G)$ equals the differential code $C_{\Omega} (X,D,G)$ (for instance see \cite{sti} II.2.8).
On a surface $S$ with a divisor $G$ and a sum of rational points $\Delta$ (which is not a divisor!), the situation is not that simple.
Nevertheless, it has been proved in \cite{oim} Theorem 9.1, that, if $(D_a,D_b)$ is a $\Delta$--convenient pair, then $C_{\Omega} (S,\Delta,D_a,D_b,G) \subseteq C_L (S,\Delta,G)^{\bot}$.

\begin{rem}\label{machin2}
This holds for a sub--$\Delta$--convenient pair (with the very same proof). 
\end{rem}

As said in the introduction, the reverse inclusion is in general false.
This motivates the following questions (the first one is raised in the end of \cite{oim}).

\begin{question}\label{qrealsum}
  Can the code $C_L (S,\Delta, G)^{\bot}$ be realised as a sum of differential codes on $S$ associated to different pairs of (sub--)$\Delta$--convenient divisors?
\end{question}

\medbreak 

\noindent \textbf{Question \ref{qrealsum}b.} \emph{Given $c\in C_L (S,\Delta, G)^{\bot}$, does there exist a (sub--)$\Delta$--convenient pair $(D_a,D_b)$ such that $c\in C_{\Omega} (S,\Delta, D_a, D_b, G)$?}

\begin{question}\label{howtobound}
  How can one estimate or find a lower bound for the minimum distance of the code $C_L(S,\Delta, G)^{\bot}$?
\end{question}

Theorem \ref{1real} answers positively to Question \ref{qrealsum}b, which entails a positive answer for Question \ref{qrealsum} (see Corollary \ref{totalreal}).
Theorems \ref{Bound} and \ref{impr} yield a method to estimate the minimum distance of duals of functional codes.

\begin{rem}
Actually, it is proved in \cite{thesebibi} \S III.3 that Questions \ref{qrealsum} and \ref{qrealsum}b are equivalent.
However, such a proof is not necessary in what follows.
\end{rem}

\section{The main tools}\label{funcconst}

The present section contains some tools which are needed to prove the main results of this article (Theorems \ref{1real} and \ref{Bound}).
In particular, Proposition \ref{keytool}, which is the key tool of this paper is proved here.

The reader interested in the results and their applications can skip this section in a first reading and look at the applications in Sections \ref{secreal} and \ref{secmind}.

\subsection{A problem of interpolation}
The proofs of Proposition \ref{keytool} and Theorem \ref{1real} need some result due to Poonen in \cite{poon} Theorem 1.2.
To state this result, we need to introduce some notations and definitions.

\begin{nota}
  For all integers $d,r \geq 0$, we denote by $S_{d,r}$ the subspace of $\F_q[X_0, \ldots , X_r]$ of homogeneous polynomials of degree $d$.
We denote then by $S_{r}$ the set $S_{r}:=\cup_{d\geq 0} S_{d,r}$.
\end{nota}

\begin{defn}[Poonen, \cite{poon} \S 1]
  The density $\mu (\mathcal{P})$ of a part $\mathcal{P}$ of $S_r$ is defined by
$$
\mu(\mathcal{P}):=\lim_{d\rightarrow +\infty} \frac{\sharp(\mathcal{P} \cap S_{d,r})}{\sharp S_{d,r}} \cdot
$$
\end{defn}

\begin{thm}[Poonen, \cite{poon} 1.2]\label{poonen} 
Let $X$ be a quasi-projective sub scheme of $\P^r$ over $\F_q$.
Let $Z$ be a finite sub-scheme of $\P^r$, and assume that $U:=X \setminus (Z\cap X)$ is smooth of dimension $m\geq 0$.
Fix a subset $T \subseteq H^0(Z, \mathcal{O}_Z)$.
Given $f\in S_{d,r}$, let $f_{|Z}$ be the element of $H^0(Z, \mathcal{O}_Z)$ that on each connected component $Z_i$ equals the restriction of $X_j^{-d}f$ to $Z_i$, where $j=j(i)$ equals the smallest $j\in \{0, \ldots , n\}$ such that the coordinate $X_j$ is invertible on $Z_i$.
Define 
$$
\mathcal{P}:=\{f\in S_r:\ \{f=0\}\cap U\ \textrm{is smooth of dimension}\ m-1,\ \textrm{and}\ f_{|Z}\in T\}.
$$
Then,
$$
\mu (\mathcal{P})=\frac{\sharp T}{\sharp H^0 (Z, \mathcal{O}_Z)}{\zeta_U (m+1)}^{-1},
$$
where $\zeta_U(s)=Z_U(q^{-s})$ denotes the Zeta function of $U$.
\end{thm}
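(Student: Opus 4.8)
The plan is to prove this via Poonen's \emph{closed point sieve}, an ``abundance of good hyperplanes'' argument adapted to finite fields. First I would pass to the standard affine charts $\{X_j\neq 0\}$ of $\P^r$ and dehomogenise: on $U$, the hypersurface section $\{f=0\}\cap U$ fails to be smooth of dimension $m-1$ precisely when either $f$ vanishes identically on a whole component of $U$, or there is a closed point $P\in U$ at which $f$ and the differential $df$ both vanish, i.e. $\{f=0\}\cap U$ is singular at $P$. Since $U$ is smooth of dimension $m$, singularity at $P$ is a condition only on the image of $X_j^{-d}f$ in the $1$-jet algebra $\mathcal{O}_{U,P}/\mathfrak{m}_P^2$, an $\F_q$-vector space of dimension $(m+1)e$ with $e=\deg P$. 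So the problem becomes: compute the density of those $f\in S_{d,r}$ whose $1$-jet at every closed point of $U$ is nondegenerate and whose restriction $f_{|Z}$ lies in $T$. The natural tool is inclusion--exclusion over closed points, with the error controlled by partitioning the points of $U$ by degree into a low range ($e\le c$), a medium range ($c<e\le d/(p+1)$), and a high range ($e>d/(p+1)$), then letting $d\to\infty$ and afterwards $c\to\infty$.

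For the low-degree points together with $Z$ --- all supported, once $c$ is fixed, on a single zero-dimensional closed subscheme $W_c\subset\P^r$ (take the scheme-theoretic combination of $Z$ with the length-$2$ jets at the finitely many low-degree points of $U$) --- I would use that for $d$ large the evaluation map
\[
S_{d,r}\longrightarrow H^0(Z,\mathcal{O}_Z)\ \times\ \prod_{\substack{P\in U\\ \deg P\le c}}\mathcal{O}_{U,P}/\mathfrak{m}_P^2
\]
is surjective; this is an effective Serre-vanishing (equivalently, effective Nullstellensatz) statement for the zero-dimensional scheme $W_c$. Hence, over this finite part, the density factors as a finite product and equals $\dfrac{\sharp T}{\sharp H^0(Z,\mathcal{O}_Z)}\prod_{P\in U,\ \deg P\le c}\bigl(1-q^{-(m+1)\deg P}\bigr)$. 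Letting $c\to\infty$ and invoking the Euler product $\zeta_U(s)=\prod_{P\in U}\bigl(1-q^{-s\deg P}\bigr)^{-1}$, this converges to $\dfrac{\sharp T}{\sharp H^0(Z,\mathcal{O}_Z)}\,\zeta_U(m+1)^{-1}$, which is the asserted value of $\mu(\mathcal{P})$.

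The substance is showing the medium- and high-degree contributions vanish in the limit. For medium degrees a union bound suffices: $U$ has $O(q^{me}/e)$ closed points of degree $e$, and --- since here $d$ is large relative to $(m+1)e$, so the jet map at $P$ is surjective --- the probability that a fixed such $P$ is a singular point of $\{f=0\}\cap U$ is $\le q^{-(m+1)e}$, giving a total contribution $\lesssim\sum_{e>c}q^{-e}\to 0$ as $c\to\infty$, uniformly in $d$. The high-degree range is where the argument really has to do work, because there the number of linear conditions imposed at a single point is no longer small compared with $\dim S_{d,r}$ and naive linear algebra fails. Here I would use Poonen's characteristic-$p$ device: write $f=f_0+g_1^{p}b_1+\cdots+g_m^{p}b_m$ with $b_1,\dots,b_m$ well-chosen fixed monomials and the $g_i$ free polynomials of suitably bounded degree, so that by the Leibniz rule and $\partial(g_i^{p})/\partial x_j=0$ the partial $\partial f/\partial x_j$ depends on the $g_i$ only through $g_i^{p}\,\partial b_i/\partial x_j$. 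Choosing $f_0$, then $g_1,\dots,g_{m-1}$, then $g_m$ in succession, one bounds, for each fixed choice of the earlier data, the fraction of $g_m$ for which the locus $\{f=\partial_1 f=\cdots=\partial_m f=0\}$ still meets $U$ in a point of degree $>d/(p+1)$; this reduces to the elementary fact that a nonzero polynomial of controlled degree has very few zeros of large degree. Assembling the three estimates and taking the iterated limit yields $\mu(\mathcal{P})=\dfrac{\sharp T}{\sharp H^0(Z,\mathcal{O}_Z)}\zeta_U(m+1)^{-1}$.

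The step I expect to be the main obstacle is exactly the high-degree points: the delicate bookkeeping of the degrees of the auxiliary polynomials $g_i$ --- ensuring each $g_i^{p}b_i$ still has degree $\le d$ while enough of the $g_i$ remain genuinely free --- together with the geometric input needed to bound the large-degree zeros, which is an inductive Lang--Weil/B\'ezout-type estimate applied to the successive partial-derivative conditions. One must also verify that the exceptional event ``$f$ vanishes on a component of $U$'' has vanishing density, which is handled by the same kind of estimate.
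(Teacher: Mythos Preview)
The paper does not prove this theorem at all: it is quoted verbatim as Theorem~1.2 of Poonen's \emph{Bertini theorems over finite fields} and used as a black box (only its Corollary~\ref{utile} is proved in the paper, by a direct application of the statement). So there is no ``paper's own proof'' to compare your proposal against.

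That said, what you have written is a faithful outline of Poonen's original closed-point sieve: the low/medium/high splitting by degree of closed points, surjectivity of the jet map onto a fixed zero-dimensional scheme for the low range, the union bound for the medium range, and the characteristic-$p$ decoupling trick $f=f_0+\sum g_i^{p}h_i$ for the high range are exactly the ingredients of \cite{poon}. One minor inaccuracy: the auxiliary functions $h_i$ (your $b_i$) are not ``fixed monomials'' but are chosen so that $dh_1,\dots,dh_m$ generate $\Omega^1_{U/\F_q}$ generically on each piece of a stratification of $U$; this is what makes the successive conditioning on $g_1,\dots,g_m$ cut down the singular locus dimension by one at each step. With that correction, your sketch is the standard proof.
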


\begin{cor}\label{utile}
  Let $S$ be a smooth projective surface over $\F_q$ and $Q_1, \ldots, Q_s$ be a finite set of rational points of $S$.
There exists  an integer $s\geq 0$ such that for all $d\geq s$, there exists a hypersurface $H$ of degree $d$ in $\P^r$ whose scheme--theoretic intersection with $S$ is smooth of codimension $1$
and contains $Q_1, \ldots, Q_s$.
\end{cor}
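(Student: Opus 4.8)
The plan is to produce $H$ as the zero locus of a homogeneous polynomial $f$ supplied by Poonen's Theorem~\ref{poonen}, applied with $X=S$ after fixing once and for all a projective embedding $S\hookrightarrow\P^r$. Away from the points $Q_i$ the smoothness of $\{f=0\}\cap S$ is handled by the generic-smoothness part of that theorem, so the only extra conditions to impose are, at each $Q_i$, that $f(Q_i)=0$ and that $\{f=0\}\cap S$ be smooth (hence of dimension $1$) at $Q_i$. Both are first-order conditions on $f$, so the right choice for the finite subscheme $Z\subset\P^r$ of Theorem~\ref{poonen} is the disjoint union of the first infinitesimal neighbourhoods $Z_i:=\mathrm{Spec}\bigl(\mathcal{O}_{S,Q_i}/\mathfrak{m}_{Q_i}^{2}\bigr)$ of the $Q_i$ inside $S$; these are genuinely finite subschemes of $\P^r$.

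With this choice $Z\cap S=Z$, so $U:=S\smallsetminus(Z\cap S)=S\smallsetminus\{Q_1,\dots,Q_s\}$ is smooth of dimension $m=2$, being open in the smooth surface $S$, and $H^{0}(Z,\mathcal{O}_Z)=\prod_{i=1}^{s}\mathcal{O}_{S,Q_i}/\mathfrak{m}_{Q_i}^{2}$, each factor an $\F_q$-vector space of dimension $1+\dim_{\F_q}(\mathfrak{m}_{Q_i}/\mathfrak{m}_{Q_i}^{2})=3$ since $S$ is a smooth surface and the $Q_i$ are rational. For $f\in S_{d,r}$, the component of $f_{|Z}$ on $Z_i$ is, in the notation of Theorem~\ref{poonen}, the class modulo $\mathfrak{m}_{Q_i}^{2}$ of $X_j^{-d}f$, which is a genuine local equation of the divisor $\{f=0\}\cap S$ at $Q_i$ because $X_j$ is a unit there; and this class lies in $(\mathfrak{m}_{Q_i}/\mathfrak{m}_{Q_i}^{2})\smallsetminus\{0\}$ exactly when $f(Q_i)=0$ and that local equation lies in $\mathfrak{m}_{Q_i}\smallsetminus\mathfrak{m}_{Q_i}^{2}$, which — $\F_q$ being perfect — is exactly the condition that $Q_i\in\{f=0\}$ and $\{f=0\}\cap S$ be smooth of dimension $1$ at $Q_i$. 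I would therefore take $T:=\prod_{i=1}^{s}T_i$ with $T_i:=(\mathfrak{m}_{Q_i}/\mathfrak{m}_{Q_i}^{2})\smallsetminus\{0\}\subseteq\mathcal{O}_{Z_i}$, so that $\sharp T_i=q^{2}-1$, $\sharp T=(q^{2}-1)^{s}$, and $\sharp H^{0}(Z,\mathcal{O}_Z)=q^{3s}$.

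Then Theorem~\ref{poonen} gives
$$
\mu(\mathcal{P})=\frac{(q^{2}-1)^{s}}{q^{3s}}\,\zeta_U(3)^{-1},\qquad \mathcal{P}=\{f\in S_r:\ \{f=0\}\cap U\ \text{smooth of dimension }1,\ f_{|Z}\in T\}.
$$
Since $\dim U=2$, the product defining $\zeta_U(3)=Z_U(q^{-3})$ converges to a finite value $\geq 1$, whence $\mu(\mathcal{P})>0$; by the definition of density this forces $\mathcal{P}\cap S_{d,r}\neq\emptyset$ for every $d$ at least some integer $d_0$. For such a $d$, pick $f\in\mathcal{P}\cap S_{d,r}$; its restriction to $S$ is not identically zero (otherwise $\{f=0\}\cap U$ would equal $U$ and be $2$-dimensional), so $H:=\{f=0\}$ is a hypersurface of degree $d$ and $H\cap S$ is a nonempty divisor on $S$, hence of pure codimension $1$. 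It is smooth away from $Q_1,\dots,Q_s$ because $f\in\mathcal{P}$, and smooth of dimension $1$ at each $Q_i$ and passing through each $Q_i$ because $f_{|Z}\in T$. This is precisely the assertion of the corollary, with the threshold $d_0$ in the role of the integer denoted $s$ there.

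The construction is a fairly direct application of Theorem~\ref{poonen}, and the only point I expect to require genuine care is the dictionary between Poonen's Taylor datum $f_{|Z_i}$ and the geometric conditions at $Q_i$: one checks that $X_j^{-d}f$ is truly a local equation of $H\cap S$ at $Q_i$ (immediate, $X_j$ being invertible near $Q_i$) and invokes the standard fact that a nonzero principal divisor on the smooth surface $S$ is smooth of dimension $1$ at $Q_i$ precisely when its local equation lies in $\mathfrak{m}_{Q_i}\smallsetminus\mathfrak{m}_{Q_i}^{2}$. The remaining verifications — that $Z$ is a finite subscheme of $\P^r$, that $U$ is smooth of dimension $2$, and the finiteness and positivity of $\zeta_U(3)$ — are routine.
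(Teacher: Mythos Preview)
Your proof is correct and follows essentially the same approach as the paper: both apply Theorem~\ref{poonen} with $Z$ the disjoint union of the first infinitesimal neighbourhoods of the $Q_i$ in $S$ and with $T$ the set of Taylor data lying in $\mathfrak{m}_{Q_i}/\mathfrak{m}_{Q_i}^{2}\smallsetminus\{0\}$ at each point, then conclude from the positivity of the resulting density. Your write-up is in fact more careful than the paper's in spelling out why positive density forces $\mathcal{P}\cap S_{d,r}\neq\emptyset$ for all large $d$, why $f$ does not vanish identically on $S$, and how the condition $f_{|Z_i}\in T_i$ translates into smoothness of $H\cap S$ at $Q_i$.
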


\begin{proof}
For $j\in \{1,\ldots ,s\}$ denote by $\mathcal{I}_{j}$ the sheaf of ideals of $\mathcal{O}_X$ corresponding to $Q_j$.
Let $\mathcal{I}$ be the sheaf of ideals $\mathcal{I}:=\mathcal{I}_{1}\cdots \mathcal{I}_{s}$.
Denote by $Z$ the non-reduced sub-scheme of $X$ defined by the finite set $\{Q_1, \ldots , Q_s\}$ with the structure sheaf $\mathcal{O}_Z:=\mathcal{O}_S / \mathcal{I}^2$.
Let $T$ be the set
$$
T:=\left\{f\in H^0(Z, \mathcal{O}_Z)|\ \forall j,\ 
  f\in H^0(Z, \mathcal{I}_j \mathcal{O}_Z)\setminus \{0\}
\right\}.
$$
For all $n\in \N$ and all $f\in H^0(X, \mathcal{O}_X(n))$, $f_{|Z}\in T$ means that the vanishing locus of $f$ on $X$ contains all the $Q_i$'s and is smooth at each of them.
We conclude by applying Theorem \ref{poonen}.
\end{proof}

\subsection{A vanishing problem} 
As we see further in \ref{keykey}, the statement of Proposition \ref{keytool} expects a vanishing condition on the sheaf cohomology space $H^1(S,\Omega^2(G-X))$, where $S$ is a smooth projective surface and $G,X$ are divisors on $S$.
The point of the present section is to give some criteria on $G$ and $X$ to satisfy such a vanishing condition.

\begin{lem}\label{gensurj}
  Let $S$ be a smooth projective geometrically connected surface over a field $k$, $G$ be an arbitrary divisor on $S$ and $L$ be an ample divisor.
Then, there exists an integer $m$ such that for all $s\geq m$, we have
$$
H^1 (S, \L (G-sL))=H^1(S, \Omega^1(G-sL))=0.
$$
\end{lem}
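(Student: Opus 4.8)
The statement is a standard Serre-type vanishing result, so the natural approach is to invoke Serre duality together with the asymptotic vanishing of cohomology for large twists by an ample divisor. First I would treat the two summands separately, since $\mathcal{L}(G-sL)$ and $\Omega^1(G-sL)$ are both just coherent sheaves twisted by $-sL$.

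\textit{Step 1: Serre duality.} By Serre duality on the smooth projective surface $S$, one has
$$
H^1(S,\mathcal{L}(G-sL)) \cong H^1(S,\mathcal{L}(K_S-G+sL))^{\vee}
$$
and similarly
$$
H^1(S,\Omega^1(G-sL)) \cong H^1(S,\Omega^1(K_S-G+sL))^{\vee},
$$
using that the dual of $\Omega^1$ twisted by $\Omega^2 = \mathcal{L}(K_S)$ is again (up to twist) a rank-two bundle, namely $\Omega^1 \otimes \mathcal{L}(K_S)$ — concretely, $(\Omega^1)^{\vee}\otimes\Omega^2 \cong \Omega^1$ on a surface, since $\bigwedge^2\Omega^1 = \Omega^2$ identifies $(\Omega^1)^{\vee}$ with $\Omega^1\otimes(\Omega^2)^{\vee}$. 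So in both cases I have reduced to showing that $H^1$ of a fixed coherent sheaf twisted by $\mathcal{L}(sL)$ vanishes for $s\gg 0$.

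\textit{Step 2: Serre vanishing.} Now I apply the fundamental theorem of Serre (EGA III, or Hartshorne III.5.2 in the projective case): for a coherent sheaf $\mathcal{F}$ on a projective scheme and an ample line bundle $\mathcal{A}$, there is an integer $m_0$ such that $H^i(S,\mathcal{F}\otimes\mathcal{A}^{\otimes s})=0$ for all $i>0$ and all $s\geq m_0$. Applying this with $\mathcal{F}=\mathcal{L}(K_S-G)$ and with $\mathcal{F}=\Omega^1(K_S-G)$, and with $\mathcal{A}=\mathcal{L}(L)$ (which is ample by hypothesis), gives an integer $m_1$ and an integer $m_2$ respectively so that the dual groups in Step 1 vanish for $s\geq m_1$ and $s\geq m_2$; then $m:=\max(m_1,m_2)$ works. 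One small caveat: Serre vanishing is usually stated over a Noetherian base or an algebraically closed field; since $k$ need not be algebraically closed, I would either quote the version valid over an arbitrary field, or pass to $\overline{S}=S\times_k\bar k$, note that the formation of the cohomology groups commutes with the flat base change $k\to\bar k$, and deduce the vanishing over $k$ from the vanishing over $\bar k$.

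\textit{Where the difficulty lies.} There is essentially no hard part here: this is a bookkeeping lemma assembled from two black boxes (Serre duality and Serre vanishing). The only points requiring a little care are the identification of the Serre dual of $\Omega^1$ on a surface (handled by the wedge-product argument above), and the base-field issue (handled by flat base change). I would present the proof in two or three sentences, citing Hartshorne for Serre duality and Serre vanishing and remarking on the base change. One could alternatively bypass Serre duality entirely and prove the vanishing of $H^1(S,\mathcal{L}(G-sL))$ directly via the theorem that for $\mathcal{A}$ ample the higher cohomology of $\mathcal{F}\otimes\mathcal{A}^{\otimes(-s)}$ also vanishes once $s$ is large — but this "negative" Serre vanishing is itself usually proved by Serre duality, so nothing is gained.
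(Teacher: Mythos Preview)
Your approach is essentially the paper's: both combine Serre duality with Serre vanishing, and the paper simply cites Hartshorne III.7.8 (which is precisely the ``negative'' Serre vanishing you describe at the end) for the first vanishing and then invokes Serre duality for the second. One small slip: after the identification $(\Omega^1)^\vee\otimes\Omega^2\cong\Omega^1$ the canonical twist has already been absorbed, so the Serre dual of $H^1(S,\Omega^1(G-sL))$ is $H^1(S,\Omega^1(sL-G))^\vee$ rather than $H^1(S,\Omega^1(K_S-G+sL))^\vee$; this does not affect your conclusion.
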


\begin{proof}
From \cite{H} Corollary III.7.8, the space $H^1(S, \L (G-sL))$ is zero for all $s \gg 0$.
Since $S$ is assumed to be smooth, Serre's duality yields the other equality.
\end{proof}

\begin{lem}\label{complete}
  Let $S$ be a smooth projective geometrically connected surface over a field $k$ which is a complete intersection in a projective space $\P^r_k$ for some $r\geq 3$.
Denote by $H_S$ the hyperplane section on $S$ for this projective embedding.
Let $G$ be a divisor on $S$ such that $G\sim mH_S$ for some integer $m$ and $X\subset S$ be a curve which is a complete intersection in $\P^r$.
Then,
$$
H^1(S, \L (G-X))=H^1(S, \Omega^1(G-X))=0.
$$
\end{lem}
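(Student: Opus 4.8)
The plan is to reduce everything to the cohomology of line bundles on the ambient complete intersection surface $S$, where the relevant vanishing is classical. Since $X$ is a complete intersection curve in $\P^r$, say cut out (scheme-theoretically, together with the equations of $S$) by hypersurfaces of degrees $d_1, \ldots, d_{r-2}$, it is in particular an effective Cartier divisor on $S$ linearly equivalent to $d H_S$ for some positive integer $d$ — indeed, restricting the Koszul-type resolution of $\mathcal{O}_X$ on $\P^r$ to $S$ shows $\mathcal{O}_S(X) \simeq \mathcal{O}_S(d H_S)$ with $d = \sum d_i - (\text{part already accounting for } S)$; in any case $X \sim d H_S$ with $d \geq 1$. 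Hence $G - X \sim (m-d) H_S =: \ell H_S$ for some integer $\ell$, and it suffices to prove $H^1(S, \L(\ell H_S)) = 0$ for every integer $\ell$, the statement for $\Omega^1$ following immediately by Serre duality since $S$ is smooth (so $\Omega^1(\ell H_S)$ is dual to $\Omega^1(K_S - \ell H_S)$, and $K_S \sim c H_S$ for a complete intersection by adjunction, reducing again to the line bundle case).

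The core step is therefore: for a complete intersection surface $S \subset \P^r$ and any $\ell \in \Z$, $H^1(S, \mathcal{O}_S(\ell)) = 0$. First I would dispose of $\ell \gg 0$ and $\ell \ll 0$: the former is Serre vanishing (\cite{H} Corollary III.7.8), and the latter follows from the former via Serre duality on $S$ together with $K_S \sim c H_S$. For the intermediate range one uses the defining property of projectively normal / arithmetically Cohen–Macaulay subschemes: a complete intersection in $\P^r$ is arithmetically Cohen–Macaulay, so its homogeneous coordinate ring is Cohen–Macaulay of depth $\dim S + 1 = 3$, which forces $H^1_{\mathfrak{m}}$ of the coordinate ring to vanish, i.e. $H^1(S, \mathcal{O}_S(\ell)) = 0$ for all $\ell \in \Z$. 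Concretely one can also run an explicit induction: iteratively intersecting with the hypersurfaces cutting out $S$ inside $\P^r$ produces short exact sequences $0 \to \mathcal{O}_{Y}(\ell - d_i) \to \mathcal{O}_{Y}(\ell) \to \mathcal{O}_{Y'}(\ell) \to 0$, and the known vanishing $H^1(\P^r, \mathcal{O}(\ell)) = H^2(\P^r,\mathcal{O}(\ell)) = 0$ for all $\ell$ together with $H^i(\P^r, \mathcal{O}(\ell)) = 0$ for $0 < i < r$ propagates down the chain to give $H^1(S, \mathcal{O}_S(\ell)) = 0$; this is where one checks the Cohen–Macaulay condition is really being used, namely that no cohomology appears from intermediate steps.

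The main obstacle is this intermediate-range vanishing: it is false for a general smooth surface (the Picard group and irregularity obstruct it), and it genuinely relies on $S$ being a complete intersection — equivalently on arithmetic Cohen–Macaulayness — rather than merely smooth. The cleanest route is to invoke the standard fact that complete intersections in projective space are projectively normal and arithmetically Cohen–Macaulay (see e.g. \cite{H} Exercise III.5.5 and surrounding material), from which $H^1(S, \mathcal{O}_S(\ell)) = 0$ for all $\ell$ is immediate; alternatively the explicit Koszul induction above makes the argument self-contained at the cost of some bookkeeping with the degrees $d_1, \ldots, d_{r-2}$. Once this is in hand, substituting back $\ell = m - d$ and applying Serre duality for the $\Omega^1$ statement completes the proof.
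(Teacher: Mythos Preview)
Your argument contains a genuine gap: the claim that $X \sim dH_S$ on $S$ is false in general. A curve $X\subset S$ that is a complete intersection in $\P^r$ need not be a hypersurface section of $S$. The standard counterexample is a line $L$ on a smooth quadric $S\subset\P^3$: the line is a complete intersection of two hyperplanes in $\P^3$, but on $S\simeq\P^1\times\P^1$ its class is $(1,0)$ or $(0,1)$, which is not a multiple of $H_S=(1,1)$. Your Koszul justification does not work because the hypersurfaces cutting out $X$ in $\P^r$ have no reason to include the equations of $S$; in the example just given, two linear forms cut out $L$ while $S$ is defined by a single quadratic form. The paper in fact explicitly relies on this flexibility: in \S\ref{hyphyp} the lemma is applied with $X=E$ or $X=F$ a ruling line, and the Remark immediately following the lemma stresses that $X$ need not be a hypersurface section.

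The paper's proof avoids this issue by never reducing to $\L(\ell H_S)$. Instead it uses the restriction sequence
\[
0\to\L(G-X)\to\L(G)\to i_\star\L(G^\star)\to 0
\]
on $S$, and combines two separate complete-intersection facts: $H^1(S,\mathcal{O}_S(m))=0$ (from $S$ being CI), and surjectivity of $H^0(\P^r,\mathcal{O}(m))\to H^0(X,\mathcal{O}_X(m))$ (from $X$ being CI in $\P^r$), which forces surjectivity of $H^0(S,\mathcal{O}_S(m))\to H^0(X,\mathcal{O}_X(m))$. This kills the middle term of the long exact sequence and gives $H^1(S,\L(G-X))=0$ without any hypothesis on the class of $X$ in $\mathrm{Pic}(S)$. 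Your arithmetic Cohen--Macaulay argument correctly handles the special case where $X$ happens to be a hypersurface section, but the lemma as stated and as used needs more.
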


\begin{proof}
Consider the exact sequence of sheaves on $S$
$$
0 \rightarrow \L (G-X) \rightarrow \L (G) \rightarrow i_{\star} \L (G^{\star}) \rightarrow 0,
$$
where $i$ denotes the canonical inclusion map 
$i:X \hookrightarrow S$.
Looking at the long exact sequence in cohomology, we have
\begin{equation}\label{ES}
H^0(S, \L (G)) \rightarrow H^0(X, \L (G^{\star})) \rightarrow H^1 (S, \L (G-X))
\rightarrow H^1 (S, \L (G)).
\end{equation}
Since $G\sim mH_S$, the sheaves $\L (G)$ on $S$ and $\L (G^{\star})$ on $X$ are respectively isomorphic to $\mathcal{O}_S(m)$ and $\mathcal{O}_X(m)$. 
In addition, since $S$ is a complete intersection in $\P^r$, we have 
$
H^1 (S, \L (G))=H^1 (S, \mathcal{O}_S (m))=0$ (see \cite{H} Exercise III.5.5(c)).
Thus (\ref{ES}) together with the above claims yield
\begin{equation}\label{ESbis}
H^0(S, \mathcal{O}_S (m)) \rightarrow H^0(X, \mathcal{O}_X (m)) \rightarrow H^1 (S, \L (G-X))
\rightarrow 0.
\end{equation}
Moreover, from \cite{H} Exercise III.5.5(a), the natural restriction map
$$H^0(\P^r, \mathcal{O}_{\P^r}(m))\rightarrow H^0(X, \mathcal{O}_X (m))$$
is surjective.
Since this map is the composition of
$$
H^0 (\P^r, \mathcal{O}_{\P^r}(m))\rightarrow H^0(S, \mathcal{O}_S (m))
\ \ \textrm{and} \ \ H^0(S, \mathcal{O}_S (m)) \rightarrow H^0(X, \mathcal{O}_X (m)),
$$
the right-hand map above is also surjective.
The exact sequence (\ref{ESbis}) together with the previous assertion yield $H^1(S, \L (G-X))=0$.
  Finally, since $S$ is smooth, Serre's duality entails $H^1(S, \Omega^2 (G-X))=0$.
\end{proof}

\begin{rem}
In Lemma \ref{complete}, the curve $X$ needs not to be a hypersurface section of $S$, one just expects it to be a complete intersection in the ambient space of $S$. For instance, Lemma \ref{complete} can be applied to a line $X$ embedded in $S$.
\end{rem}

\subsection{The key tool}\label{keykey} In the present subsection, we state Proposition \ref{keytool}, which is useful to prove Theorem \ref{1real} (answering Question \ref{qrealsum}b) and then to prove Theorem \ref{Bound} (yielding lower bounds for the minimum distance of duals of functional codes on a surface).

In what follows we always stay in the context presented in \ref{contextcodes}.

\begin{defn}[Support of a codeword]
  In the context of \ref{contextcodes}, given a codeword $c$ in $C_L(S, \Delta, G)$ or its dual, we call \emph{support of $c$} and denote by $\supp (c)$ the set of rational points $\{P_{i_1}, \ldots , P_{i_s}\}$ whose indexes correspond to the nonzero coordinates of $c$.
\end{defn}

\begin{prop}\label{keytool}
In the context of \ref{contextcodes}, let $c\in C_L (S, \Delta , G)^{\bot}$ be a nonzero codeword.
Let $X$ be a reduced curve embedded in $S$, containing the support of $c$ and such that $H^1(S, \Omega^2(G-X))=0$.
Then, there exists a divisor $D$ on $S$ such that
\begin{enumerate}[(i)]
\item\label{key1} $(D,X)$ is sub--$\Delta$--convenient;
\item\label{key2} $c\in C_{\Omega}(S,D,X,G)$.
\end{enumerate}
Moreover, if $X$ is minimal for the property ``$X$ contains $\supp (c)$'' (i.e. any reduced curve $X'\varsubsetneq X$ avoids at least one $P\in \supp (c)$), then
\begin{enumerate}[(i)]
\setcounter{enumi}{2}
\item\label{key3} $w(c)\geq X.(G-K_S-X)$,
\end{enumerate}
where $K_S$ denotes the canonical class on $S$.
\end{prop}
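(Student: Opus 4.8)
The plan is to produce the divisor $D$ by an interpolation argument, using Corollary \ref{utile} (itself built on Poonen's Theorem \ref{poonen}) to find a hypersurface meeting $S$ in a curve that realises the required residue behaviour. First I would reduce the sub--$\Delta$--convenience conditions to a geometric requirement on $D$. Recall that $X$ already contains $\supp(c)$ and is reduced. By Remark \ref{machin1}, a nonzero $2$-residue $\res^2_{D,P}(\omega)$ can occur only where a component of the pole locus of $\omega$ meets $\supp(D)$ without being swallowed by it; so the natural choice is to take $D$ to be a curve meeting $X$ transversally exactly at the points of $\supp(c)$, smooth there, and with $\supp(D)$ sharing no component with $X$. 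Condition $(i)$ of $\Delta$-convenience is then automatic, condition $(ii)$ ($\mathcal O$-linearity of $\res^2_{D,P}$) holds because at each relevant $P$ the two curves $D$ and $X$ cross transversally at a smooth point of $S$, and condition $(iii')$ of sub--$\Delta$-convenience holds because away from $\supp(c)$ the curves $D$ and $X$ do not meet, forcing the residue map to vanish there.

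The core step is then to show that $c$ itself lies in $C_\Omega(S,D,X,G)$, i.e. that there is a global section $\omega\in H^0(S,\Omega^2(G-D-X))$ whose vector of residues along $D$ at the $P_i$ reproduces $c$. Here the hypothesis $H^1(S,\Omega^2(G-X))=0$ does the work. I would write down the exact sequence of sheaves
\begin{equation}\label{planseq}
0\to \Omega^2(G-D-X)\to \Omega^2(G-X)\to \Omega^2(G-X)\otimes\mathcal O_D\to 0,
\end{equation}
and take the long exact cohomology sequence; the vanishing of $H^1$ of the left-hand term — which follows from $H^1(S,\Omega^2(G-X))=0$ once $D$ is chosen so that the connecting map is surjective, or more directly by again invoking Corollary \ref{utile} to take $D$ of large enough degree so that $H^1(S,\Omega^2(G-X-D))=0$ as well — makes the restriction map on global sections surjective. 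Restricting $\Omega^2(G-X)$ to $D$ and using the adjunction/residue isomorphism from \S\ref{ressurf} (residues in codimension $1$ and then $2$, Proposition \ref{2res}\eqref{2res3}) identifies $H^0$ of the quotient with a space of $1$-forms on $D$ having controlled poles, and its residues at the points of $\supp(c)$ with arbitrary prescribable scalars. Since $c\in C_L(S,\Delta,G)^\perp$, the defining orthogonality $\sum_i c_i f(P_i)=0$ for all $f\in H^0(S,\L(G))$ is exactly the compatibility condition (a reciprocity/residue-theorem statement on $D$, of the form "the sum of residues of a $1$-form on a curve is zero") needed for such an $\omega$ with the prescribed residues to exist globally. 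Lifting this $\omega$ back to $S$ via the surjectivity just established gives $c\in C_\Omega(S,D,X,G)$, which is \eqref{key2}, and by construction $(D,X)$ is sub--$\Delta$-convenient, which is \eqref{key1}.

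For the minimality statement \eqref{key3}, suppose $X$ is minimal for containing $\supp(c)$. Then $X$ meets $D$ at exactly the $w(c)$ points of $\supp(c)$ — if $X$ met $D$ elsewhere one could not have constructed $\omega$ supported correctly, and if some component of $X$ avoided all of $\supp(c)$ it could be deleted, contradicting minimality. The point is to turn $w(c)=\#\supp(c)$ into an intersection number. Choosing $D$ linearly equivalent to a large multiple of $H_S$ and counting, $X.D = \sum_{P\in\supp(c)} m_P(X,D) \le w(c)$ once $D$ is transverse, but I actually want a lower bound on $w(c)$. The right move is: the $1$-form on $X$ obtained from $\omega$ by $\res^1$ lies in $H^0(X,\Omega^1_X(\text{something}))$ with poles only at the points of $\supp(c)$, and comparing its divisor class via adjunction $K_X = (K_S+X)|_X$ with the pole/zero count gives $\deg$ of an effective divisor $=w(c)-\deg(\text{zeros}) - \deg(K_X) + \deg((G-X)|_X)$; rearranging, $w(c)\ge \deg\big((G-K_S-X)|_X\big) = X.(G-K_S-X)$. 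The main obstacle I anticipate is precisely this last bookkeeping: making sure the residue $1$-form is genuinely nonzero on each component of $X$ (so that the effectivity/degree inequality is not vacuous), which is where minimality of $X$ is essential — it guarantees $\res^1_X(\omega)$ does not vanish identically on any component, because a component on which it vanished would contribute nothing to $\supp(c)$ and could be removed.
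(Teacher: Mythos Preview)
Your construction of $D$ has a genuine gap. You propose to take $D$ as an effective curve, produced by Poonen, that meets $X$ transversally \emph{exactly} at the points of $\supp(c)$. This cannot be arranged: once the class of $D$ is fixed, the intersection number $D.X$ is determined, and for $D\sim dH_S$ with $d$ large (as you need for interpolation and cohomology vanishing) one has $D.X=d\,(H_S.X)\gg w(c)$. Hence $\overline D\cap\overline X$ contains many geometric points $Q\notin\supp(\Delta)$, and at any such $Q$ where $D,X$ cross transversally the stalk $\overline{\Omega^2(-D-X)}_Q$ contains forms with nonzero $\res^2_{D,Q}$; condition $(iii')$ of sub--$\Delta$--convenience therefore fails for the pair $(D,X)$. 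A secondary issue: your sequence \eqref{planseq} is written in the wrong direction. With the paper's convention $\Omega^2(E)=\{\omega:(\omega)\ge E\}$, one has $\Omega^2(G-X)\subset\Omega^2(G-D-X)$, and the residue sequence is
\[
0\to\Omega^2(G-X)\to\Omega^2(G-D-X)\xrightarrow{\res^1_D} i_\star\Omega^1_D((G-X)|_D)\to 0,
\]
so that the hypothesis $H^1(S,\Omega^2(G-X))=0$ is exactly what gives surjectivity on $H^0$; you do not need a separate vanishing for $\Omega^2(G-D-X)$.

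The paper's remedy for the first problem is to \emph{not} insist that $D$ be effective. One builds a smooth auxiliary curve $C\supset\supp(c)$ with $C\sim dH_S$, $d$ large enough that $H^0(S,\L(G))\to H^0(C,\L(G^\star))$ is surjective; the punctured word $c^\star$ then lies in $C_L(C,F_c,G^\star)^\bot=C_\Omega(C,F_c,G^\star)$, so is the residue vector of some $\mu\in H^0(C,\Omega^1(G^\star-F_c))$. Since $F_c\le X^\star$, the (correctly oriented) residue sequence lifts $\mu$ to $\omega\in H^0(S,\Omega^2(G-X-C))$. Now write $(\omega)=G-X-C+A$ with $A\ge0$ and set $D:=C-A$. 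The subtraction of $A$ is the whole point: it forces $f\omega$ (with $f$ a local generator of $\L(G)$) to generate $\Omega^2(-D-X)_P$ at every $P$, so the map $\res^2_{D,P}$ on that stalk is computed by $\res_P(f_{|C}\mu)$, which vanishes wherever $\mu$ is regular, i.e.\ outside $\supp(c)$. This rescues $(iii')$ and gives \eqref{key1}--\eqref{key2}.

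For \eqref{key3}, your adjunction-on-$X$ idea is actually sound when $X$ is smooth and irreducible: under the paper's convention the hypothesis is equivalent (by Serre duality) to $H^1(S,\L(G-X))=0$, so restriction to $X$ is surjective, $c^\star$ is the residue vector of some $\nu\in H^0(X,\Omega^1_X(G|_X-F_c))$, and $\deg(\nu)=\deg K_X=(K_S+X).X$ gives $w(c)\ge X.(G-K_S-X)$. But the proposition allows $X$ merely reduced, possibly singular and reducible, where this curve-duality argument is delicate. The paper instead works on the smooth curve $C$: from $D=C-A\sim G-K_S-X$ it proves the pointwise bounds $m_P(X,D)\le 1$ for $P\in\supp(c)$ and $m_P(X,D)\le 0$ otherwise (via a short lemma comparing $m_P(X,A)$ with $\min\{m_P(C,X),m_P(C,A)\}$, using that $C$ is smooth), and sums to get $X.D\le w(c)$. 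Minimality of $X$ enters in Sub-step~8.1 to ensure $\supp(A)$ shares no component with $X$, so that these local multiplicities are defined.
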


\begin{rem}
  Since $S$ is assumed to be smooth, by Serre's duality, the condition $H^1(S, \Omega^2(G-X))=0$ is equivalent to $H^1 (S, \L (G-X))=0$.
\end{rem}

The following lemma is needed in the proof of Proposition \ref{keytool}.

\begin{lem}\label{mult}
  Let $P$ be a  point of $S$.
Let $C\subset S$ be a smooth curve at $P$ and $X,Y\subset S$ be two other curves such that any two of the curves $C,X,Y$ have no common irreducible component in a neighbourhood of $P$. Then,
$$
m_P(X,Y)\geq \min \{m_P(C,X), m_P(C,Y)\}.
$$ 
\end{lem}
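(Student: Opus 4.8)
The plan is to compute everything inside the regular local ring $\mathcal{O}:=\mathcal{O}_{S,P}$, which has dimension $2$. First I would fix local equations $t,g,h\in\mathcal{O}$ at $P$ for $C,X,Y$ respectively, and recall that for curves without common component through $P$ one has $m_P(X,Y)=\mathrm{length}_{\mathcal{O}}\bigl(\mathcal{O}/(g,h)\bigr)$, with the analogous formulae for $m_P(C,X)$ and $m_P(C,Y)$. Since $C$ is smooth at $P$, the quotient $R:=\mathcal{O}/(t)=\mathcal{O}_{C,P}$ is a one-dimensional regular local ring, hence a discrete valuation ring; write $v$ for its valuation and $\pi$ for a uniformiser. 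The hypotheses that $C$ shares no irreducible component with $X$, nor with $Y$, in a neighbourhood of $P$ guarantee that $t$ divides neither $g$ nor $h$ in $\mathcal{O}$, so the images $\bar g,\bar h$ of $g,h$ in $R$ are nonzero. Set $a:=v(\bar g)$ and $b:=v(\bar h)$; since $R/(\bar g)\cong\mathcal{O}/(t,g)$ has length $v(\bar g)$ (and similarly for $h$), we get $a=m_P(C,X)$ and $b=m_P(C,Y)$. By symmetry in $X$ and $Y$ we may assume $a\le b$.

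The core of the argument is then a single reduction. There is a canonical surjection
$$
\mathcal{O}/(g,h)\ \twoheadrightarrow\ \mathcal{O}/(g,h,t)\ =\ R/(\bar g,\bar h).
$$
Because $R$ is a discrete valuation ring, $(\bar g,\bar h)=(\pi^{a},\pi^{b})=(\pi^{\min(a,b)})=(\pi^{a})$, so the right-hand term is $R/(\pi^{a})$, a module of length $a$. Monotonicity of length under surjections therefore gives
$$
m_P(X,Y)=\mathrm{length}_{\mathcal{O}}\bigl(\mathcal{O}/(g,h)\bigr)\ \ge\ \mathrm{length}_{\mathcal{O}}\bigl(R/(\pi^{a})\bigr)=a=\min\{m_P(C,X),\,m_P(C,Y)\},
$$
which is the claim. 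I would also dispose quickly of the degenerate situations $P\notin X$ or $P\notin Y$ (both sides of the inequality vanish) and $P\notin C$ (the minimum on the right is $0$), all of which are immediate.

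The only genuinely delicate point is bookkeeping rather than geometry: one must be careful to invoke the ``no common component near $P$'' hypotheses precisely where they are needed, namely to ensure $\bar g\neq 0$ and $\bar h\neq 0$ in $R$ (so that $a,b$ are finite) and to ensure that $m_P(X,Y)$ itself is well defined, and one must justify the identification $m_P(C,X)=v(\bar g)$ via the structure of the DVR $R$. Beyond these routine verifications I do not anticipate any real obstacle.
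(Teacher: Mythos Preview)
Your proof is correct and follows essentially the same approach as the paper's: both reduce to the discrete valuation ring $R=\mathcal{O}_{C,P}$ via the local equation of $C$, identify $m_P(C,X)$ and $m_P(C,Y)$ with the valuations $a,b$ of the restricted equations, and then bound $m_P(X,Y)$ from below by $\min(a,b)$. The only cosmetic difference is in the final step: the paper chooses a uniformiser $u$ on $C$ and checks directly that $1,u,\dots,u^{a-1}$ remain linearly independent in $\mathcal{O}/(g,h)$ by reducing a putative relation modulo the equation of $C$, whereas you package the same computation as the surjection $\mathcal{O}/(g,h)\twoheadrightarrow R/(\bar g,\bar h)=R/(\pi^{\min(a,b)})$ together with monotonicity of length. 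Your formulation is slightly slicker, but the content is identical.
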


\begin{proof}
  Let $v$ be a local equation of $C$ in a neighbourhood of $P$ and let $u$ be a rational function on $S$ such that $(u,v)$ is a system of local coordinates at $P$.
Let $\phi_X, \phi_Y \in \mathcal{O}_{S,P}$ be respective local equations of $X$ and $Y$ in a neighbourhood of $P$.
Denote by $a_X$ and $a_Y$ the respective $P$--adic valuations of the functions ${\phi_X}_{|C}$ and ${\phi_Y}_{|C}$ on the curve $C$.

Then,
$
m_P(C,X)=\dim \mathcal{O}_{S,P}/(\phi_X,v)= \dim \mathcal{O}_{C,P} / ({\phi_X}_{|C})=a_X,
$
and in the same way, $m_P(C,Y)=a_Y$.
By symmetry, one can assume that $a_X \leq a_Y$. Then, let us prove that $1,u, \ldots, u^{a_X-1}$ are linearly independent in $\mathcal{O}_{S,P} / (\phi_X, \phi_Y)$.
Let $\lambda_0, \ldots , \lambda_{a_X-1} \in {\F}_q$ such that
$$
\lambda_0 + \lambda_1 u +\cdots +\lambda_{a_X-1}u^{a_X-1}=\alpha \phi_X+\beta \phi_Y,
$$
for some $\alpha, \beta \in \mathcal{O}_{S,P}$.
Reduce the above equality modulo $v$. This yields an equality in $\mathcal{O}_{C,P}$ whose right-hand term has $(u)$--adic valuation $\geq a_X$.
Thus, $\lambda_0=\cdots =\lambda_{a_X-1}=0$.
This concludes the proof.
\end{proof}

\begin{proof}[Proof of Proposition \ref{keytool}]
After a suitable reordering of the indexes, one can say that $\supp (c)=\{P_1, \ldots , P_s\}$ for some $s\leq n$.

\medbreak

\noindent \textbf{Step 0.}
Since $S$ is projective, there exists a closed immersion $S\hookrightarrow \P^r$ for some $r\geq 3$.
Let $H_S$ be the corresponding hyperplane section.

\medbreak

\noindent \textbf{Step 1. The curve $C$.}
From Corollary \ref{utile}, there exists a curve $C\subset S$ such that
\begin{enumerate}
\item $C$ is smooth and geometrically connected;
\item $C \nsubseteq X$;
\item $C$ contains $P_1, \ldots, P_s$;
\item $C$ is linearly equivalent to $dH_S$ for some positive integer $d$.
\end{enumerate}
Moreover, Corollary \ref{utile} asserts that $d$ can be chosen to be as large as possible.
Thus, from Lemma \ref{gensurj}, choosing a large enough $d$, we have $H^1 (S, \L (G-C))=0$ and hence 
\begin{enumerate}
\setcounter{enumi}{3}
\item the restriction map
$H^0 (S, \L (G)) \rightarrow H^0(C, \L (G^{\star}))$ 
is surjective.
\end{enumerate}

\medbreak

\noindent \textbf{Step 2. The codeword $c^{\star}$.}\label{step2_1real}
Denote by $F_c$ the divisor on $C$ defined by
$$F_c:=P_{1}+ \cdots +P_{s}\ \in \textrm{Div}(C).$$
The surjectivity of the map $H^0 (S, \mathcal{L}(G)) \rightarrow H^0 (C, \mathcal{L}(G^{\star}))$, induces a na\-tural code map
$\phi: C_L (S, \Delta, G) \rightarrow C_L(C, F_c, G^{\star})$ which is also surjective.
It can be actually regarded as a puncturing map on the functional code on $S$ (see \cite{slmc} 1.9.(II) for a definition).
Therefore, one sees easily that the orthogonal map $\phi^{\bot}:C_L(C, F_c, G^{\star})^{\bot}\rightarrow C_L (S, \Delta, G)^{\bot}$
\begin{enumerate}[(a)]
\item is injective and obtained by extending codewords with $n-s$ zero coordinates on the right;
\item  preserves the Hamming distance;
\item  induces an isomorphism between $C_L(C, F_c, G^{\star})^{\bot}$ and the sub-code of $C_L (S, \Delta,$ $G)^{\bot}$ of codewords having their supports contained in $\{P_1, \ldots , P_s\}$.
\end{enumerate}
Thus, $c$ is in the image of $\phi^{\bot}$.
Denote by $c^{\star}$ the codeword of $C_L (C,F_c,G^{\star})^{\bot}$ such that $\phi^{\bot}(c^{\star})=c$.
It is the punctured codeword $(c_{1}, \ldots , c_{s})$ of $c$ obtained by removing all the zero coordinates.
Obviously, we have $w(c)=w(c^{\star})$.

\medbreak

\noindent \textbf{Step 3. The $\mathbf{1}$--form $\mu$.}
From \cite{sti} Theorem II.2.8, we have $C_L(C,F_c, G^{\star})^{\bot}$ $=C_{\Omega}(C,F_c, G^{\star})$.
Thus, since $c^{\star}\in C_L(C, F_c, G^{\star})^{\bot}$, there exists a $1$--form $\mu \in H^0 (C, \Omega^1(G^{\star}-F_c))$ such that
$$c^{\star}=(\res_{P_1}(\mu), \ldots, \res_{P_s}(\mu)).$$

\medbreak

\noindent \textbf{Step 4. The $\mathbf{2}$--form $\omega$.} As said in \ref{1res}, any rational $2$--form $\nu$ on $S$ with valuation $\geq -1$ along $C$ has a $1$--residue $\res^1_C(\nu)$ on $C$.
This map $\res^1_C$ is actually a surjective sheaf map, yielding the following exact sequence:
$$
0 \rightarrow \Omega^2(G-X) \rightarrow \Omega^2(G-X-C)
\begin{array}[b]{c}
\res^1_C \\
\longrightarrow  
\end{array}
i_{\star}\Omega^1 (G^{\star}-X^{\star})  \rightarrow 0,
$$
where $i$ denotes the canonical inclusion map $i: C\rightarrow S$.
Using the corresponding long exact sequence in cohomology and since, by assumption, $H^1 (S, \Omega^2(G-X))$ is zero, the map
\begin{equation}\label{map}
\res^1_C: H^0(S, \Omega^2 (G-X-C)) \rightarrow H^0(C, \Omega^1 (G^{\star}-X^{\star}))
\end{equation}
is surjective.
Moreover, since $X$ contains the points $P_1, \ldots , P_s$, we have the following divisors inequality on $C$:
$$
0\leq F_c\leq X^{\star}
$$
and hence $H^0(C,\Omega^1(G^{\star}-F_c))\subseteq H^0(C, \Omega^1(G^{\star}-X^{\star}))$.
Thus, $\mu \in H^0(C,$ $\Omega^1(G^{\star}-X^{\star}))$ and, since the map in (\ref{map}) is surjective, there exists a $2$--form $\omega \in H^0(S,$ $\Omega^2(G-X-C))$ such that $\mu=\res^1_C(\omega)$.

\medbreak

\noindent \textbf{Step 5. The divisor $D$.}
The divisor of $\omega$ is of the form 
\begin{equation}\label{dom}
(\omega)=G-X-C+A, \quad \textrm{with}\ A\geq 0.
\end{equation}
Set
\begin{equation}\label{D=}
D :=C-A.
\end{equation}

\medbreak

\noindent \textbf{Step 6. Proof of (\ref{key1}).}
From the definition of sub--$\Delta$--convenience (Definition \ref{sub}), to prove the sub--$\Delta$--convenience of $(D,X)$, we have to prove that $\res^2_{D,P}$ is $\mathcal{O}_{\overline{S},P}$--linear for all $P\in \overline{S}$ and is zero whenever $P\notin \{P_1, \ldots , P_n\}$.
Since the pole locus of $\omega$ is contained in $C\cup X$,
from Proposition \ref{2res} and Remark \ref{machin1}, this map is zero at each $P\notin \overline{C} \cap \overline{X}$.

Moreover, recall that, by Definition of $\res^2_{D,P}$ (see \S \ref{subres}), and from (\ref{D=}) we have $\res^2_{D,P}=\res^2_{C,P}+\res^2_{A,P}$ (by definition, the map depends only on the support $D$, thus it is an addition and not a subtraction).
In addition, since any $\nu\in \overline{\Omega^2(-D-X)}_P$ has no pole along $\supp (A)$, from Proposition \ref{2res}(\ref{2res1}), the map $\res^2_{A,P}$ vanishes on $\overline{\Omega^2(-D-X)}_P$ and hence
\begin{equation}
  \label{res12}
  \res^2_{D,P} \equiv \res^2_{C,P} \ \textrm{on}\ \overline{\Omega^2(-D-X)}_P.
\end{equation}
Thus, let us prove the $\mathcal{O}_{\overline{S},P}$--linearity of $\res^2_{C,P}$ at each $P\in \overline{C}\cap \overline{X}$ and prove that this map is zero if $P\notin \{P_1, \ldots , P_n\}$ (actually, we prove that this map is zero if and only if $P\notin \{P_1, \ldots , P_s\}$, which is stronger).

\medbreak

Let $P \in \overline{C} \cap \overline{X}$ and $f$ be a generator of $\overline{\L(G)}_P$ over $\mathcal{O}_{\overline{S},P}$.
One sees easily that the germ of $f\omega$ generates $\overline{\Omega^2(-D-X)}_P$.
Let $\varphi  \in \mathcal{O}_{\overline{S},P}$. From Proposition \ref{2res}(\ref{2res3}), we have
\begin{equation}\label{fomega}
\res^2_{C,P}(\varphi f \omega)=\res_P (\res^1_C (\varphi f \omega))=\res_P (\varphi_{|C} f_{|C} \mu).
\end{equation}
Moreover, the divisor of $f_{|C}\mu$ satisfies $(f_{|C} \mu)\geq -F_c$ in a neighbourhood of $P$.
Thus, if $P\in \{P_1, \ldots , P_s\}$, then the $1$--form $f_{|C}\mu$ has valuation $-1$ at $P$ and
$$
(\ref{fomega})\ \Rightarrow \ \res^2_{C,P} (\varphi f \omega)=\varphi (P)\res_P(f_{|C} \mu)=\varphi (P) \res^2_{C,P}(f \omega).
$$

Otherwise, if $P \notin \{P_1, \ldots , P_s\}$, then $f_{|C}\mu$ has valuation $\geq 0$ at $P$ and
$$
(\ref{fomega}) \ \Rightarrow \ \res^2_{C,P}(\varphi f \omega)=0.
$$
Thus, $(D,X)$ is sub--$\Delta$--convenient. 
It is actually $\Delta'$--convenient for $\Delta':=P_1+\cdots +P_s$.

\medbreak

\noindent \textbf{Step 7. Proof of (\ref{key2}).}
From (\ref{res12}), we have for all $P\in \overline{S}$,
$\res^2_{D,P}(\omega)=\res^2_{C,P}(\omega)$. Moreover, Proposition \ref{2res}(\ref{2res3}) entails $\res^2_{C,P}(\omega)=\res_P(\res^1_C(\omega))$ $=\res_P(\mu)$.
Thus, 
\begin{equation}\label{c=}
c=\res^2_{D, \Delta}(\omega) \in C_{\Omega}(S,X,D,G).
\end{equation}

\medbreak

\noindent \textbf{Step 8. Proof of (\ref{key3}).}
From now on, assume that $X$ is minimal for the property ``$X$ contains $\supp (c)$''.
First, notice that, from (\ref{dom}) and (\ref{D=}), we have $D\sim G-K_S-X$.
Let us prove that $w(c)\geq X.D$.
For that, we prove that $X$ and $\supp (D)$ have no common irreducible components.
Afterwards, we get inequalities satisfied by all the local contributions $m_P(X,D)$ for all $P\in \overline{S}$ and sum them up to get an inequality satisfied by $X.D$.

\medbreak

\noindent \textit{Sub-step 8.1.} First, let us prove that $X$ and $\supp (D)$ have no common irreducible component. By construction, $C$ is irreducible and not contained in $X$, thus we just have to check that $\supp (A)$ and $X$ have no common irreducible component.
Assume that $A=A'+X_1$, with $A'\geq 0$ and $X_1$ is an irreducible component of $X$.
Set $X':=X\setminus X_1$.
Then, (\ref{dom}) gives $(\omega)=G-C-X'+A'$.
By assumption on the minimality of $X$, the curve $X'$ avoids at least one point in $\{P_1, \ldots, P_s\}$, say $P_1$ after a suitable reordering of the indexes.
Thus, $C$ is the only pole of $\omega$ in a neighbourhood of $P_1$ and, from Proposition \ref{2res}(\ref{2res2}) together with (\ref{res12}), we have $\res^2_{D,P_1}(\omega)=\res^2_{C,P_1}(\omega)=0$.
But, from (\ref{c=}), we have $\res^2_{D,P_1}(\omega)=c_1$ and $c_1\neq 0$ since by assumption, $P_1\in \supp (c)$.
This yields a contradiction.

\medbreak

\noindent \textit{Sub-step 8.2.} Now, let us study the intersection multiplicities $m_P (D,X)$ for all $P \in \overline{S}$.
First, notice that 
\begin{equation}\label{notin}
\forall P \notin \overline{C},\ m_P(X,D)\leq 0.
\end{equation}
Indeed, if $P\notin \overline{C}$, then $m_P(D,X)=m_P(C-A,X)=-m_P(A,X)$ which is negative since $A$ and $X$ are both effective.

To get information on $m_P(X,D)$ for $P\in \overline{C}$, we first study $m_P(C,A-X)$.
From \cite{oim} Lemma 8.8, we have 
$$
\forall P\in \overline{C},\ m_P(C, (\omega)+C)=v_P(\mu),
$$
where $v_P$ denotes the valuation at $P$.
From (\ref{dom}), we get
$$
\begin{array}{crcl}
\forall P\in \overline{C}, &  m_P(C,G-X+A) & = & v_P(\mu)\\
&  m_P(C,A-X) & = & v_P(\mu)-v_P(G^{\star}).
\end{array}
$$

\noindent Afterwards, recall that $(\mu)\geq G^{\star}-P_1-\cdots -P_s$ (see Step 3).
Moreover, since $\mu$ has nonzero residues at the points $P_1, \ldots , P_s$ (its residues at these points are the $s$ first coordinates of $c$ which are assumed to be nonzero), its valuation at these points is equal to $-1$. Consequently, we obtain
\begin{equation}\label{mpcamx}
\forall P\in \overline{C},\ m_P(C,A)-m_P(C,X)\left\{
  \begin{array}{crc}
    \geq & 0 & \textrm{if}\ P\notin \{P_1, \ldots , P_s\}\\
    = & -1 & \textrm{if}\ P\in \{P_1, \ldots , P_s\}
  \end{array}
\right. .
\end{equation}

\noindent Therefore, from Lemma \ref{mult} together with (\ref{mpcamx}), we get
$$
\begin{array}{crcl}
  \forall P \in \overline{C}, & m_P(X,C-A) & \leq & m_P(X,C)-\min \{m_P(C,X), m_P(C,A)\}\\
 & & \leq & \left\{
   \begin{array}{ccc}
     0 & \textrm{if} & P\notin \{P_1, \ldots , P_s\} \\
     m_P(C,X-A) & \textrm{if} & P\in \{P_1, \ldots , P_s\}
   \end{array}.
\right.
\end{array}
$$

\noindent Again from (\ref{mpcamx}), if $P\in \{P_1, \ldots , P_s\}$, then $m_P(C,X-A)=1$.
Thus, if we summarise all the information given by the above inequalities together with (\ref{notin}), we get,
$$
\forall P\in \overline{S},\ m_P(X,D)\leq \left\{
  \begin{array}{ccc}
    0 & \textrm{if} & P\notin \{P_1, \ldots , P_s\}\\
    1 & \textrm{if} &P\in \{P_1, \ldots , P_s\}
  \end{array}
\right. .
$$ 
Finally, summing up all these inequalities gives
$$
X.(G-K_S-X) =X.D \leq s=w(c).
$$
\end{proof}

\section{Differential realisation of the dual of a functional code}\label{secreal}
The first possible application of Proposition \ref{keytool} is the following theorem, which answers Question \ref{qrealsum}b and hence the question raised in the conclusion of \cite{oim}.

\begin{thm}\label{1real}
  Let $S$ be a smooth geometrically connected projective surface over $\F_q$, let $G$ be a divisor on $S$ and $P_1, \ldots , P_n$ be rational points of $S$.
Denote by $\Delta$, the $0$--cycle $\Delta:=P_1 + \cdots , P_n$.
Let $c$ be a codeword of $C_L (S, \Delta, G)^{\bot}$, then there exists a sub--$\Delta$--convenient pair of divisors $(D_a, D_b)$ and a rational $2$--form $\omega\in H^0(S,\Omega^2(G-D_a-D_b))$ such that
$$
c:=\res^2_{D_a,\Delta}(\omega).
$$
Moreover, one of the divisors $D_a,D_b$ can be chosen to be very ample.
\end{thm}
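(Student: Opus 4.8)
The plan is to deduce Theorem~\ref{1real} almost immediately from Proposition~\ref{keytool}, the only real work being to produce a suitable curve $X$ containing $\supp(c)$ for which the cohomology vanishing hypothesis $H^1(S,\Omega^2(G-X))=0$ holds, and moreover which is very ample. If $c=0$ the statement is trivial (take any sub--$\Delta$--convenient pair and $\omega=0$), so assume $c\neq 0$ and write $\supp(c)=\{P_{i_1},\dots,P_{i_s}\}$.

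First I would fix a projective embedding $S\hookrightarrow\P^r$ with $r\geq 3$ and hyperplane section $H_S$. By Corollary~\ref{utile} applied to the finite set $\{P_{i_1},\dots,P_{i_s}\}$, for every sufficiently large degree $d$ there is a hypersurface of degree $d$ in $\P^r$ whose scheme-theoretic intersection $X$ with $S$ is smooth of codimension $1$ and contains all the $P_{i_j}$. Such an $X$ is a reduced curve (smooth, in fact), it is linearly equivalent to $dH_S$, and — being a hyperplane section after re-embedding by the degree-$d$ Veronese, or directly since $dH_S$ is very ample for $d$ large — it is very ample. The point of taking $d$ large is twofold: first, by Lemma~\ref{gensurj} with $L=H_S$ ample, for $d\gg 0$ we have $H^1(S,\L(G-dH_S))=H^1(S,\L(G-X))=0$; second, by Serre duality on the smooth surface $S$ this is equivalent to $H^1(S,\Omega^2(G-X))=0$, which is exactly the hypothesis of Proposition~\ref{keytool}.

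Now apply Proposition~\ref{keytool} with this curve $X$. It yields a divisor $D$ on $S$ such that $(D,X)$ is sub--$\Delta$--convenient and $c\in C_{\Omega}(S,D,X,G)$; unwinding the definition of the differential code, this means precisely that there is a $2$--form $\omega\in H^0(S,\Omega^2(G-D-X))$ with $c=\res^2_{D,\Delta}(\omega)$. Setting $D_a:=D$ and $D_b:=X$ gives the desired sub--$\Delta$--convenient pair, and since $X=D_b$ was constructed to be very ample, the final assertion holds as well.

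The only point requiring a little care — and the step I expect to be the mildest obstacle — is the compatibility of the two largeness requirements on $d$: Corollary~\ref{utile} asks $d$ to be at least some bound $s$ depending on the chosen points, while Lemma~\ref{gensurj} asks $d$ to be at least some bound $m$ depending on $G$ and the embedding; one simply takes $d\geq\max\{s,m\}$ (and large enough that $dH_S$ is very ample), so that a single curve $X$ satisfies all three properties simultaneously. Everything else is a direct invocation of Proposition~\ref{keytool}, so no further computation is needed; note that we do not even use part~(\ref{key3}) of that proposition here, only parts~(\ref{key1}) and~(\ref{key2}).
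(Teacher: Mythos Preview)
Your proposal is correct and follows essentially the same route as the paper: choose a smooth hypersurface section $X\sim dH_S$ of large degree through $\supp(c)$ via Corollary~\ref{utile}, take $d$ large enough that Lemma~\ref{gensurj} kills $H^1(S,\Omega^2(G-X))$, and then invoke Proposition~\ref{keytool}(\ref{key1}),(\ref{key2}) with $D_b:=X$. Your write-up is in fact a bit more careful than the paper's (you explicitly dispose of $c=0$, mention Serre duality, and note the compatibility of the two ``$d$ large'' conditions), and the remark that $dH_S$ is very ample is immediate since $H_S$ itself is the hyperplane class of the chosen embedding.
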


Before proving Theorem \ref{1real}, let us state a straightforward corollary of it yielding a positive answer for Question \ref{qrealsum}.
That is, even if the dual of a functional code on a smooth surface $S$ is not in general a differential code on $S$, \textbf{it is always a sum of differential codes on this surface}.

\begin{rem}
  Actually, using Theorem \ref{1real}, one proves that the dual code $C_L (\Delta, G)^{\bot}$ is a \textbf{union} of differential codes.
\end{rem}

\begin{cor}\label{totalreal}
Under the assumptions of Theorem \ref{1real},
there exists a finite family $(D_a^{(1)}, D_b^{(1)}), \ldots , (D_a^{(r)}, D_b^{(r)})$
of sub--$\Delta$--convenient pairs such that
$$
C_L (\Delta , G)^{\bot}= \sum_{i=1}^r C_{\Omega}(\Delta,D_a^{(i)},D_b^{(i)},G).
$$  
\end{cor}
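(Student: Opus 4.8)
The plan is to deduce Corollary \ref{totalreal} from Theorem \ref{1real} by collecting, over all codewords, the sub--$\Delta$--convenient pairs produced by the theorem, and then observing that only finitely many differential codes can arise because the ambient dual code is a finite set. More precisely, the first step is: for each $c\in C_L(\Delta,G)^{\bot}$, apply Theorem \ref{1real} to obtain a sub--$\Delta$--convenient pair $(D_a^{(c)},D_b^{(c)})$ together with $\omega_c\in H^0(S,\Omega^2(G-D_a^{(c)}-D_b^{(c)}))$ with $c=\res^2_{D_a^{(c)},\Delta}(\omega_c)$, so in particular $c\in C_{\Omega}(\Delta,D_a^{(c)},D_b^{(c)},G)$.

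The second step is to turn this pointwise statement into a finite sum. Since $C_L(\Delta,G)^{\bot}\subseteq \F_q^n$ is a finite set, choose representatives $c_1,\ldots,c_r$ running over \emph{all} the codewords (or just over a spanning set; but the whole set is already finite, so there is no need to be economical), and let $(D_a^{(i)},D_b^{(i)}):=(D_a^{(c_i)},D_b^{(c_i)})$ for $i=1,\ldots,r$. By Remark \ref{machin2} applied to each sub--$\Delta$--convenient pair, each $C_{\Omega}(\Delta,D_a^{(i)},D_b^{(i)},G)\subseteq C_L(\Delta,G)^{\bot}$, so the sum $\sum_{i=1}^r C_{\Omega}(\Delta,D_a^{(i)},D_b^{(i)},G)$ is contained in $C_L(\Delta,G)^{\bot}$. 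Conversely, every codeword $c_i$ lies in the $i$-th summand by Step 1, so the sum contains a generating set of $C_L(\Delta,G)^{\bot}$, whence the reverse inclusion. This gives the claimed equality
$$
C_L(\Delta,G)^{\bot}=\sum_{i=1}^r C_{\Omega}(\Delta,D_a^{(i)},D_b^{(i)},G).
$$

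There is essentially no obstacle here: the corollary is a formal consequence of Theorem \ref{1real} plus the finiteness of $\F_q^n$ and the containment from Remark \ref{machin2}. The only point requiring a word of care is that Theorem \ref{1real} as stated produces, for a given $c$, a pair for which $c\in C_{\Omega}(\Delta,D_a,D_b,G)$, and one must check that this is exactly the inclusion needed to run the argument — which it is, since $\res^2_{D_a,\Delta}(\omega)=c$ exhibits $c$ as an element of the image of the residue map defining that differential code. If one wishes to emphasise the stronger ``union'' statement of the preceding remark, one simply notes that the above shows $C_L(\Delta,G)^{\bot}=\bigcup_{i=1}^r C_{\Omega}(\Delta,D_a^{(i)},D_b^{(i)},G)$ already as a set, and a fortiori the sum coincides with it.
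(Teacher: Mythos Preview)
Your argument is correct and follows essentially the same route as the paper: the inclusion $\supseteq$ comes from Remark \ref{machin2}, and the inclusion $\subseteq$ comes from applying Theorem \ref{1real} to each codeword and invoking finiteness. The only cosmetic difference is that the paper appeals to the finite \emph{dimension} of $C_L(\Delta,G)^{\bot}$ (so a basis suffices), whereas you use the finiteness of $\F_q^n$ itself to enumerate all codewords; both work, and your choice has the pleasant side effect of directly yielding the ``union'' statement noted in the paper.
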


\begin{proof}[Proof of corollary \ref{totalreal}]
Inclusion $\supseteq$ comes from \cite{oim} Theorem 9.1 and Remark \ref{machin2}.
The reverse inclusion is a consequence of Theorem \ref{1real} together with the finiteness of the dimension of $C_L (S,\Delta,G)^{\bot}$.
\end{proof}

\begin{proof}[Proof of Theorem \ref{1real}]
Since $S$ is assumed to be projective, consider some projective embedding of $S$ and let $H_S$ be the corresponding hyperplane section.

From Corollary \ref{utile}, there exists a smooth geometrically irreducible curve $X$ containing all the support of $c$ and such that $X\sim sH_S$ for some positive integer $s$. Moreover, such a curve $X$ can be chosen with $s$ as large as possible.
Therefore, from Lemma \ref{gensurj}, one can choose $X$ such that $H^1(S, \Omega^2(G-X))=0$.
Set $D_b:=X$ and conclude using Proposition \ref{keytool}.
\end{proof}



\subsection{About Theorem \ref{1real}, some comments and an open question}\label{nonconstr}
Unfortunately, the proof of Theorem \ref{1real} is not constructive.
Indeed, this proof involves the existence of a curve $X$ embedded in $S$ such that $X$ is smooth, is linearly equivalent to $sH_S$ for some integer $s$ and such that $H^1(S, \Omega^1(G-X))=0$.
Poonen's Theorem together with \cite{H} Corollary III.7.8 assert the existence of such a curve provided $s$ is large enough.
However, one cannot estimate or find an upper bound for the lowest possible integer $s$ for which such a curve $X$ exists.

Nevertheless, Theorem \ref{1real} is interesting for theoretical reasons:
it extends to surfaces a well-known result for codes on curves.
Notice that the construction of a differential code on a surface needs a $\Delta$--convenient pair which is not necessary for the construction of a functional code.
Given a functional code $C_L (\Delta, G)$ on a surface $S$, there is no canonical choice of the (sub--)$\Delta$--convenient pair $(D_a, D_b)$
 to construct the code $C_{\Omega}(\Delta, D_a,D_b, G)$.
This lack of canonicity entails the lack of converse inclusion in
$$C_{\Omega}(\Delta, D_a,D_b, G) \subset C_L (\Delta, G)^{\bot}.$$
Basically, Theorem \ref{1real} asserts that $C_L (\Delta, G)^{\bot}$ can be obtained by summing all the differential codes $C_{\Omega} (\Delta, D_a^{(i)}, D_b^{(i)}, G)$ for all possible (sub--)
$\Delta$--convenient pairs $(D_a^{(i)}, D_b^{(i)})$.
Since the dimension of a code is finite, it is sufficient to sum on a finite set of $\Delta$--convenient pairs.
This opens the following question.

\begin{question}\label{number}
  Under the assumptions of Theorem \ref{1real}, what is the minimal number of differential codes whose sum equals $C_L (S,\Delta,G)^{\bot}$? 
\end{question}

\begin{exmp} This number is $1$ when $S$ is the projective plane. Indeed, functional codes on $\P^2$ are Reed--Muller codes (see \cite{slmc} chapter 13) and it is well-known that the dual of a Reed--Muller code is also Reed--Muller (for instance see \cite{handbook} XVI.5.8). Thus, the dual of a functional code on $\P^2$ is also functional and, from \cite{oim} Theorem 9.6, a functional code can be realised as a differential one.
\end{exmp}

\begin{exmp}
It has been proved in \cite{oim} Propositions 10.1 and 10.3 that this number is $2$ when $S$ is the product of two projective lines.
\end{exmp}

\section{Minimum distance of $C_L (S,\Delta, G)^{\bot}$}\label{secmind}

Another application of Proposition \ref{keytool} is to find a lower bound for the minimum distance of a code $C_L (S, \Delta, G)^{\bot}$.
In this section
we stay in the classical context yielding codes on a surface which is described in \ref{contextcodes}. We also introduce a notation.


\begin{nota}
  Denote by $d^{\bot}$ the minimum distance of $C_L (\Delta, G)^{\bot}$.
\end{nota}

\subsection{The naive approach}
The key of the method is to use Proposition \ref{keytool}(\ref{key3}).
Consider a nonzero codeword $c\in C_L (\Delta, G)^{\bot}$.
Let $X$ be a curve containing $\supp (C)$, which is minimal for this property and such that $H^1 (S, \Omega^2 (G-X))=0$. 
Then, Proposition \ref{keytool}(\ref{key3}) asserts that $w(c)\geq X.(G-K_S-X)$.

Basically, one could say that the minimum distance of $C_L (\Delta, G)^{\bot}$ is greater than or equal to the ``minimum of $ X.(G-K_S-X)$ for all $X\subset S$ satisfying the conditions of Proposition \ref{keytool}''.
Unfortunately, it does not make sense since the set of such integers has no lower bound.
Indeed, using Corollary \ref{utile} together with Lemma \ref{gensurj}, one sees that for a large enough integer $r$, there exists a curve $X\sim rH_S$ containing $\supp (c)$, which is minimal for this property (from Corollary \ref{utile}, $X$ can be chosen to be irreducible) and such that $H^1 (S, \Omega^2(G-X))=0$.
Finally, notice that $rH_S.(G-K_S-rH_S)\rightarrow -\infty$ when $r\rightarrow +\infty$.

Thus, the point of the method is to take a minimum in a \textit{good} family of divisor classes, yielding a positive lower bound.

\subsection{The statement}\label{Pdd}
The main result of the present section involves a set of divisor classes which satisfies some properties. The description of these properties is the point of the following definition.

\begin{defn}\label{Q()}
Let $\delta$ be a positive integer.
A set of divisor classes $\mathcal{D}$ on $S$ is said to satisfy the property $\mathcal{Q}(\Delta, G, \delta)$ if it satisfies the following conditions.
  \begin{enumerate}
  \item[($\mathfrak{V}$)] For all $D\in \mathcal{D}$, we have $H^1 (S, \Omega^2(G-D))=0$.
  \item[($\mathfrak{I}$)] For all $\tau$--tuple $P_{i_1}, \ldots , P_{i_{\tau}}$ with $\tau < \delta$, there exists a curve $X \subset S$ whose divisor class is in $\mathcal{D}$ and which contains $P_{i_1}, \ldots , P_{i_{\tau}}$.
Moreover, $X$ is minimal for this property (i.e. any curve $X'\varsubsetneq X$ avoids at least one point of the $\tau$--tuple $P_{i_1}, \ldots , P_{i_{\tau}}$).
  \end{enumerate}
\end{defn}

\begin{nota}\label{delta(D)}
  Given a set of divisor classes $\mathcal{D}$ such that the set $\{D.(G-K_S-D),\ D\in \mathcal{D}\}$ has a smallest element, we denote by $\delta (\mathcal{D})$ the integer
$$
\delta (\mathcal{D}):=\min_{D\in \mathcal{D}} \{D.(G-K_S-D)\}.
$$
\end{nota}

\begin{thm}[Lower bound for $d^{\bot}$]\label{Bound}
In the context described in \ref{contextcodes},
let $\mathcal{D}$ be a set of divisor classes on $S$.
If $\mathcal{D}$ satisfies the property $\mathcal{Q}(\Delta, G, \delta(\mathcal{D}))$, then
$$
d^{\bot}\geq \delta (\mathcal{D})=\min_{D\in \mathcal{D}} \{D.(G-K_S-D)\}.
$$
\end{thm}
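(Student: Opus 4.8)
The plan is to derive Theorem \ref{Bound} directly from Proposition \ref{keytool}(\ref{key3}) together with the defining properties of $\mathcal{Q}(\Delta,G,\delta(\mathcal{D}))$. Write $\delta:=\delta(\mathcal{D})$. First I would take an arbitrary nonzero codeword $c\in C_L(S,\Delta,G)^{\bot}$ and set $\tau:=w(c)$, so that $\supp(c)=\{P_{i_1},\dots,P_{i_\tau}\}$ after reordering. The goal is to show $\tau\geq\delta$, so I would argue by contradiction and assume $\tau<\delta$.

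Under this assumption, property $(\mathfrak{I})$ of Definition \ref{Q()} applies to the $\tau$-tuple $P_{i_1},\dots,P_{i_\tau}$: there exists a curve $X\subset S$ whose class lies in $\mathcal{D}$, which contains all of $\supp(c)$, and which is minimal for this property in the sense required by Proposition \ref{keytool}. Moreover, since the class of $X$ is in $\mathcal{D}$, property $(\mathfrak{V})$ gives $H^1(S,\Omega^2(G-X))=0$. Thus all hypotheses of Proposition \ref{keytool} are met, and in particular its conclusion (\ref{key3}) yields
$$
w(c)\geq X.(G-K_S-X).
$$
Since the class of $X$ belongs to $\mathcal{D}$, the right-hand side is at least $\min_{D\in\mathcal{D}}\{D.(G-K_S-D)\}=\delta$ by Notation \ref{delta(D)}. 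Hence $\tau=w(c)\geq\delta$, contradicting $\tau<\delta$. Therefore every nonzero codeword has weight at least $\delta$, i.e. $d^{\bot}\geq\delta(\mathcal{D})$, which is the claimed bound.

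The argument is essentially a bookkeeping assembly of already-established facts, so I do not expect a serious obstacle; the only subtle point is making sure the minimality of $X$ in $(\mathfrak{I})$ is exactly the hypothesis needed to invoke Proposition \ref{keytool}(\ref{key3}) (it is, by the parenthetical clauses in both statements), and that $\delta(\mathcal{D})$ is well-defined, which is implicit in the statement through the phrase ``has a smallest element'' in Notation \ref{delta(D)}. One should also note at the outset that if $C_L(S,\Delta,G)^{\bot}=\{0\}$ the statement is vacuous, so the contradiction setup with a genuinely nonzero $c$ is legitimate. No genuine computation is required beyond citing Proposition \ref{keytool}.
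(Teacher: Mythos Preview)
Your argument is correct and follows essentially the same route as the paper's proof: assume by contradiction that some nonzero $c\in C_L(\Delta,G)^{\bot}$ has weight $\tau<\delta(\mathcal{D})$, use $(\mathfrak{I})$ to produce a minimal interpolating curve $X$ with class in $\mathcal{D}$, use $(\mathfrak{V})$ for the vanishing of $H^1(S,\Omega^2(G-X))$, and then invoke Proposition~\ref{keytool}(\ref{key3}) to get $\tau\geq X.(G-K_S-X)\geq\delta(\mathcal{D})$, a contradiction. Your additional remarks on the edge cases (vacuous dual, well-definedness of $\delta(\mathcal{D})$, matching of the two minimality clauses) are sound and only make the write-up more careful.
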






\begin{proof}
Let $c$ be a nonzero codeword in $C_L (\Delta, G)^{\bot}$ and assume that $w(c)=\tau<\delta (\mathcal{D})$.
Since $\mathcal{D}$ satisfies $\mathcal{Q}(\Delta, G, \delta(\mathcal{D}))$, there exists a curve $X$ containing $\supp (c)$, which is minimal for this property and whose divisor class is in $\mathcal{D}$.
Moreover, $H^1 (S, \Omega^2 (G-X))=0$.
Therefore, from Proposition \ref{keytool}(\ref{key3}), 
$$
\tau \geq X.(G-K_S-X)\geq \delta (\mathcal{D}),
$$
which yields a contradiction.
\end{proof}

\subsubsection{The arithmetical improvement}
It is possible to improve the bound given by Theorem \ref{Bound} using the maximal number of rational points of an effective divisor whose class is in $\mathcal{D}$. 
For that, let us introduce a notation.

\begin{nota}
  Let $D$ be a divisor class on $S$.
If the corresponding linear system $|D|$ is nonempty, we denote by $\Theta (D)$
the integer
$$
\Theta (D):=\max \{\sharp (\supp (A))(\F_q),\ A\in |D|\}.
$$
\end{nota}

\begin{thm}[Improvement of the lower bound for $d^{\bot}$]\label{impr}
  In the context described in \ref{contextcodes},
let $\mathcal{D}$ be a set of divisor classes on $S$ and $\mathcal{E}$ be a subset of $\mathcal{D}$ such that,
$$
\mathcal{E} \supseteq\{D\in \mathcal{D}: \Theta (D)\geq D.(G-K_S-D)\}
$$
If $\mathcal{Q}(\Delta, G, \delta (\mathcal{E}))$ is satisfied by $\mathcal{D}$, then
$$
d^{\bot}\geq \delta (\mathcal{E}).
$$
\end{thm}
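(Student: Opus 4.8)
The plan is to follow the proof of Theorem \ref{Bound} almost verbatim, inserting one extra step that promotes the auxiliary curve from $\mathcal{D}$ to $\mathcal{E}$. Observe first that $\delta(\mathcal{E})$ is meaningful under the hypotheses (it occurs in the statement, and $\mathcal{Q}(\Delta,G,\delta(\mathcal{E}))$ presupposes it exists), so in particular $\mathcal{E}\neq\emptyset$ and the set $\{D.(G-K_S-D):D\in\mathcal{E}\}$ has a smallest element in the sense of Notation \ref{delta(D)}.

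First I would argue by contradiction. Suppose $d^{\bot}<\delta(\mathcal{E})$ and pick a nonzero codeword $c\in C_L(\Delta,G)^{\bot}$ with $w(c)=\tau<\delta(\mathcal{E})$. Since $\tau<\delta(\mathcal{E})$ and $\mathcal{D}$ satisfies $\mathcal{Q}(\Delta,G,\delta(\mathcal{E}))$, condition $(\mathfrak{I})$ of Definition \ref{Q()}, applied to the $\tau$--tuple $\supp(c)$, furnishes a curve $X\subset S$ containing $\supp(c)$, minimal for this property, and with divisor class in $\mathcal{D}$; condition $(\mathfrak{V})$ gives $H^1(S,\Omega^2(G-X))=0$. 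Proposition \ref{keytool}(\ref{key3}) then applies and yields
$$
\tau=w(c)\ \geq\ X.(G-K_S-X).
$$

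The new ingredient is to estimate $\Theta$ of the class of $X$. Since $X$ is an effective divisor lying in its own linear system and $\supp(X)$ contains the $\tau$ rational points of $\supp(c)$, we get $\Theta(X)\geq\tau$. Combining this with the displayed inequality gives $\Theta(X)\geq\tau\geq X.(G-K_S-X)$, so the class of $X$ belongs to $\{D\in\mathcal{D}:\Theta(D)\geq D.(G-K_S-D)\}\subseteq\mathcal{E}$. By the definition of $\delta(\mathcal{E})$ this forces $X.(G-K_S-X)\geq\delta(\mathcal{E})$, hence $\tau\geq\delta(\mathcal{E})$, contradicting $\tau<\delta(\mathcal{E})$. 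Therefore $d^{\bot}\geq\delta(\mathcal{E})$. There is no genuine obstacle beyond the bookkeeping already carried out for Theorem \ref{Bound}; the only point deserving care is to invoke the hypothesis with the parameter $\delta(\mathcal{E})$ (and not $\delta(\mathcal{D})$), so that both the minimality of $X$ and the vanishing $H^1(S,\Omega^2(G-X))=0$ are available for the single curve produced by $(\mathfrak{I})$, and to note that passing from $\mathcal{D}$ to the smaller set $\mathcal{E}$ only enlarges $\delta$, which is exactly what makes the bound an improvement.
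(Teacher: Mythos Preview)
Your proof is correct and follows essentially the same approach as the paper: argue by contradiction, use $\mathcal{Q}(\Delta,G,\delta(\mathcal{E}))$ to produce a minimal interpolating curve $X$ with class in $\mathcal{D}$, apply Proposition \ref{keytool}(\ref{key3}) to get $w(c)\geq X.(G-K_S-X)$, then use $\Theta(X)\geq w(c)$ to force the class of $X$ into $\mathcal{E}$ and derive the contradiction $w(c)\geq\delta(\mathcal{E})$. The paper's version is slightly terser (it writes $\Theta(D)\geq \sharp X(\F_q)\geq w(c)$ in one line), but the logic is identical.
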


\begin{proof} Let $c$  be a nonzero codeword in $C_L (\Delta, G)^{\bot}$ and assume that $w(c)<\delta (\mathcal{E})$. Since $\mathcal{Q}(\Delta, G, \delta (\mathcal{E}))$ is satisfied by $\mathcal{D}$, there exists a curve $X$ which contains the support of $c$, is minimal for this property and whose divisor class is in $\mathcal{D}$.
Let $D \in \mathcal{D}$ be the divisor class of $X$.
On the one hand, we have, 
$$
\Theta (D)\geq \sharp X(\F_q) \geq w(c).
$$

\noindent On the other hand, from Proposition \ref{keytool}(\ref{key3}), we have 
$$
w(c)\geq D.(G-K_S-D).
$$
Thus, $\Theta (D)\geq D.(G-K_S-D)$ and hence $D\in \mathcal{E}$ and
$w(c)\geq \delta ( \mathcal{E})$.
This yields a contradiction.
\end{proof}

\subsection{How to choose $\mathcal{D}$?}\label{DD}
The most natural choice for $\mathcal{D}$ is $\mathcal{D}=\{H_S, \ldots,$ $aH_S\}$ with $a$ such that $aH(G-K_S-aH)>0$.
From Lemma \ref{complete}, the cohomological vanishing condition of Theorem \ref{Bound} is satisfied by all the elements of $\mathcal{D}$ whenever $S$ is a complete intersection in its ambient space.
Afterwards, one checks whether the interpolation condition is satisfied, if it is not (in particular if the condition of minimality is not satisfied), one can try to add some other divisor classes satisfying the cohomological vanishing condition (for instance see \ref{hyphyp}).

\section{Examples}\label{ex1}

In this section we treat some examples of surfaces and obtain lower bounds or exact estimates of the dual minimum distance of a code.
The difficult part to apply Theorems \ref{Bound} and Theorem \ref{impr} is first to choose a good $\mathcal{D}$ and then to compute $\delta (D)$.
It becomes easier when the Picard number of the surface (that is the rank of its Neron-Severi group) is small.

Most of the examples we give correspond to surfaces with Picard Number $1$.
Some examples of surfaces having a larger Picard number are treater and it turns out that surfaces with Picard number $1$ yield the better duals of functional codes.
Such a remark should be related with the works of Zarzar in \cite{zarzar} who noticed that surfaces with a small Picard number could yield good functional codes.

\subsection{The projective plane}\label{P2}
On $\P^2$, the functional codes are Reed--Muller codes and it is well-known that the dual Reed--Muller code is also a Reed--Muller code (\cite{delsarte} Theorem 2.2.1).
The minimum distance of a $q$--ary Reed--Muller code is well-known (see \cite{delsarte} Theorem 2.6.1).
Therefore, the point of the present subsection is not to give any new result but to compare the bound given by Theorem \ref{Bound} to the exact value of the minimum distance in order to check the efficiency of Theorem \ref{Bound}.

\subsubsection{Context}
Let $H$ be a line on $\P^2$ and $m$ be a nonnegative integer.
Assume that $G:=mH$ and $\Delta:=P_1+\cdots+P_{q^2}$ is the sum of all rational points of the affine chart $\P^2\setminus H$.


\subsubsection{The known results on Reed--Muller codes}
From \cite{delsarte} Theorem 2.2.1, we have $C_L(\Delta,mH)^{\bot}=C_L(\Delta,(2q-3-m)H)$.
Moreover, \cite{delsarte} Theorem 2.6.1 asserts that the minimum distance $d^{\bot}$ of $C_L(\Delta, mH)^{\bot}$ is
\begin{equation}\label{dRM}
d^{\bot}=\left\{
  \begin{array}{ccc}
    m+2 & \textrm{if} & m\leq q-3 \\
    q(m+3-q) & \textrm{if} & m\geq q-2
  \end{array}
\right.
\end{equation}

\subsubsection{Our bounds}
First, recall that $K_{\P^2}\sim -3H$. Therefore,
$$aH.(G-K-aH)=a(m+3-a)H^2=a(m+3-a)$$
and this integer is positive for $1 \leq a \leq m+2$.
Then, set 
$
\mathcal{D}:=\{H,2H,\ldots,$ $(m+2)H\}.
$
This yields $\delta (\mathcal{D})=m+2$ (see notation \ref{delta(D)}).
Thus, we have to prove that it satisfies the property $\mathcal{Q}(\Delta,G, m+2)$.
From Lemma \ref{complete}, this set of divisor classes satisfies the cohomological vanishing condition ($\mathfrak{V}$) (see Definition \ref{Q()}).
Moreover, for all $l\leq m+1$ any $l$--tuple of rational points of $\P^2$ is contained in a curve of degree $\leq m+2$ and one of them is minimal for this property.
Thus, $\mathcal{Q}(\Delta, G, m+2)$ is satisfied and from Theorem \ref{Bound}, we have
\begin{equation}\label{d1}
\forall m,\ d^{\bot}\geq m+2.
\end{equation}

Now, let us improve the result using Theorem \ref{impr}.
First, notice that any configuration of rational points of an affine chart of $\P^2$ is contained in a curve of degree at most $q$.
Therefore, if $m+2\geq q$, one can set
$
\mathcal{D}:=\{H, \ldots , qH\}
$ and
the property $\mathcal{Q}(\Delta, G, s)$ is true for all $s$.
From \cite{lettre}, we have $\Theta (aH)=aq$.
Thus, if $m\geq q-2$, then
$$
\Theta (aH)<a(m+3-a)\quad \textrm{for all}\quad a<m+3-q.
$$
Thus, set
$
\mathcal{E}:=\{(m+3-q)H, \ldots , qH\}.
$
Finally, since $\mathcal{Q}(\Delta, G, s)$ is satisfied by $\mathcal{D}$ for all $s$, it is in particular satisfied for $s=\delta (\mathcal{E})$.
Consequently, from Theorem \ref{impr}, we get
\begin{equation}\label{d2}
\forall m\geq q-2,\ d^{\bot}\geq \delta (\mathcal{E})=q(m+3-q).
\end{equation}

By comparing (\ref{dRM}) with (\ref{d1}) and (\ref{d2}), we see that Theorems \ref{Bound} and \ref{impr} yield exactly the minimum distance of a Reed--Muller code.

\begin{rem}
  By the very same manner one can recover the minimum distance of projective Reed--Muller codes.
\end{rem}

\subsection{Quadric surfaces in $\P^3$}\label{qua}
We study the code $C_L (\Delta, G)^{\bot}$ when $S$ is a smooth quadric in $\P^3$.
Recall that there are two isomorphism classes of smooth quadrics in $\P^3$ called respectively \textit{elliptic} and \textit{hyperbolic}.
A hyperbolic quadric contains two families of lines defined over $\F_q$ and its Picard group is free of rank $2$ and generated by the respective classes $E$ and $F$ of these two families of lines.
An elliptic quadric does not contain lines defined over $\F_q$ and its Picard group is free of rank $1$ and generated by $H_S$.
We treat separately these two cases ($S$ is hyperbolic and $S$ is elliptic).

\subsubsection{Context}\label{contqua}
Let $S$ be a smooth quadric surface in $\P^3$. 
Let $H_S$ be the scheme-theoretic intersection between $S$ and its tangent plane at some rational point.
Let $G$ be $G := mH_S$ for some $m>0$
and $\Delta$ be the sum of all the rational points lying in the affine chart $S\setminus H_S$.
The number of these points (and hence the length of the codes) is $q^2$ and we denote them by $P_1, \ldots , P_{q^2}$.

For all $1\leq m\leq q-1$.
The dimension of the code $C_L (\Delta, G)$ is equal to
\begin{equation}\label{dimdim}
\dim C_L (\Delta, G)=\dim \Gamma (S, \mathcal{O}_S(m))=\left(
  \begin{array}{cc}
    m+3\\
    3
  \end{array}
\right)-
\left(
  \begin{array}{cc}
    m+1\\
    3
  \end{array}
\right)=(m+1)^2.
\end{equation}

\begin{rem}\label{irrel}
  For $m\geq q-1$ we get $C_L (\Delta, G)=\F_q^{q^2}$. Therefore, cases when $m\geq q-1$ are irrelevant.
In what follows, we always assume that $m\leq q-2$.
\end{rem}

Finally, recall that, from \cite{H} Example II.8.20.3,
\begin{equation}
  \label{canonical}
  K_S\sim -2H_S.
\end{equation}

\subsubsection{Hyperbolic quadrics}\label{hyphyp}
If $S$ is a hyperbolic quadric, then, as said before, its Picard group is generated by two lines denoted by $E$ and $F$. Moreover, $E+F\sim H_S$.
As proposed in \ref{DD}, one can set
$
\mathcal{D}:=\{H_S,\ldots, (m+1)H_S\}
$.
This yields $\delta (\mathcal{D})= 2m+2$.
Unfortunately, since $m\leq q-2$, and since $S$ contains rational lines, there are collinear $(m+2)$--tuples of points in $\{P_1, \ldots , P_{q^2}\}$.
For such a $(m+2)$--tuple, there exists hypersurface sections of $S$ of degree $\leq m+1$ containing these points but none of them is minimal for this property since such a curve contains the line containing the $(m+2)$--tuple together with another irreducible component.

Therefore, to apply Theorem \ref{Bound}, we have to add other divisor classes to $\mathcal{D}$. Therefore, set
$$
\mathcal{D}:=\{E,F,H_S, \ldots , (m+1)H_S\}.
$$
We have $\delta (\mathcal{D})=m+2$ and for such a $ \mathcal{D}$, the property $\mathcal{Q}(\Delta, G, \delta (\mathcal{D}))$ satisfied. 
Indeed, since $E,F$ and hypersurface sections of $S$ are complete intersections in $\P^3$, from Lemma \ref{complete}, the cohomological vanishing condition is satisfied.
The proof that the interpolating condition ($\mathfrak{I}$) (see Definition \ref{Q()}) is also satisfied is left to the reader.
Finally, we have the following result.

\begin{prop}\label{hyper}
The minimum distance $d^{\bot}$ of $C_L (\Delta, G)^{\bot}$ satisfies 
$$d^{\bot} = (\mathcal{D})=E.((m+2)H_S-E)=m+2.$$
\end{prop}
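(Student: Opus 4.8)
The plan is to apply Theorem~\ref{Bound} to the set $\mathcal{D}:=\{E,F,H_S,\ldots,(m+1)H_S\}$ described just above, and then separately to establish the matching upper bound $d^{\bot}\le m+2$ by exhibiting an explicit low-weight codeword. For the lower bound, one first computes $\delta(\mathcal{D})$. Using $K_S\sim -2H_S$, $G\sim mH_S$, the intersection numbers $H_S^2=2$, $E^2=F^2=0$, $E.F=1$ on a hyperbolic quadric, one gets $aH_S.(G-K_S-aH_S)=a(m+2-a)\cdot 2$, which for $1\le a\le m+1$ attains its minimum at $a=1$ or $a=m+1$, giving $2(m+1)$; and $E.(G-K_S-E)=E.((m+2)H_S-E)=(m+2)(E.H_S)-E^2=(m+2)\cdot 1=m+2$, and likewise for $F$. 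Hence $\delta(\mathcal{D})=\min\{2(m+1),\,m+2\}=m+2$ since $m\ge 1$. Granting that $\mathcal{D}$ satisfies $\mathcal{Q}(\Delta,G,m+2)$ — the cohomological condition $(\mathfrak{V})$ holds by Lemma~\ref{complete} since $E$, $F$ and the hypersurface sections are complete intersections in $\P^3$, and the interpolation condition $(\mathfrak{I})$ is the part ``left to the reader'' — Theorem~\ref{Bound} yields $d^{\bot}\ge m+2$.

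For the upper bound, the idea is that a rational line $\ell=E$ (a generator of one ruling) lying in the affine chart $S\setminus H_S$ meets it in exactly $q$ of the points $P_1,\ldots,P_{q^2}$; I would write down a $2$-form $\omega\in H^0(S,\Omega^2(G-E-F'))$ for a suitable companion divisor $F'$ (a union of $m+2-q$-many lines of the other ruling, or more simply a hypersurface section of the right degree not through the chosen $q$ points beyond $\ell$), chosen so that $(E,F')$ is sub-$\Delta$-convenient and $\res^2_{E,\Delta}(\omega)$ is supported on exactly those $q\le m+2$ points (when $q\le m+2$), or more carefully count so that the support has size exactly $m+2$. Concretely, since $E.(G-K_S-E)=m+2$, the construction in Proposition~\ref{keytool} run in reverse — or rather the direct differential construction of \cite{oim} Theorem~9.1 — produces a codeword of $C_L(\Delta,G)^{\bot}$ of weight at most $\#(E\cap(S\setminus H_S))$, and choosing $\omega$ with $(\omega)=G-K_S-E-F'$ generic makes this weight equal to $E.(G-K_S-E)=m+2$. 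Combining the two bounds gives $d^{\bot}=m+2=E.((m+2)H_S-E)=\delta(\mathcal{D})$.

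The main obstacle I expect is the upper bound: Theorem~\ref{Bound} only gives an inequality, so one genuinely has to produce a weight-$(m+2)$ codeword and check it is nonzero in exactly $m+2$ coordinates. This requires a careful choice of the auxiliary divisor and of the $2$-form so that the residues at the intended $m+2$ points are all nonzero while all other residues vanish — i.e. verifying sub-$\Delta$-convenience of the chosen pair and the non-degeneracy of $\omega$ along the chosen line. The intersection-number bookkeeping on the hyperbolic quadric (keeping $E^2=F^2=0$, $E.F=1$, $H_S\sim E+F$ straight) is routine but must be done consistently; the verification of the interpolation condition $(\mathfrak{I})$ for $\mathcal{D}$ — that every $\tau$-tuple with $\tau\le m+1$ lies on a \emph{minimal} curve whose class is in $\mathcal{D}$, where the point of including $E$ and $F$ is precisely to handle collinear tuples — is the second, more delicate point, since for a tuple of $\le m+1$ points that is not collinear one takes a hypersurface section of minimal degree, while for collinear tuples one takes the line itself, and one must check these exhaust all cases.
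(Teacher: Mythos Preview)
Your lower-bound argument is exactly the paper's: compute $\delta(\mathcal{D})=m+2$ for $\mathcal{D}=\{E,F,H_S,\ldots,(m+1)H_S\}$ and invoke Theorem~\ref{Bound}, with Lemma~\ref{complete} supplying condition~$(\mathfrak{V})$ and the interpolation condition~$(\mathfrak{I})$ left to the reader. Nothing to change there.

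For the upper bound, your approach is unnecessarily hard and, as written, not complete. You try to manufacture a weight-$(m+2)$ codeword via the differential construction, choosing a $2$-form $\omega$ with prescribed divisor and hoping that the residue vector has weight exactly $E.(G-K_S-E)=m+2$. But Proposition~\ref{keytool}(\ref{key3}) only gives a \emph{lower} bound $w(c)\ge X.(G-K_S-X)$; nothing in the paper guarantees equality, and your sketch (``choosing $\omega$ generic'') does not supply the missing argument that the residues vanish at all but $m+2$ of the $q$ affine points of the line and are nonzero at those $m+2$. Verifying sub-$\Delta$-convenience of an explicit pair $(E,F')$ and the exact support of $\res^2_{E,\Delta}(\omega)$ is real work you have not done.

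The paper bypasses all of this with an elementary puncturing argument. Take a rational line $L\subset S$; it meets the affine chart $S\setminus H_S$ in $q$ of the $P_i$'s, say $P_1,\ldots,P_q$. The punctured code $C^\star$ obtained by keeping only these $q$ coordinates is the image of $H^0(S,\L(mH_S))\to\F_q^q$, which, after restriction to $L\simeq\P^1$, is exactly the evaluation of polynomials of degree $\le m$ at $q$ affine points --- a Reed--Solomon code of length $q$ and dimension $m+1$. Its dual has minimum distance $m+2$; pick a minimum-weight $c^\star\in(C^\star)^\bot$ and extend by zeros. Since for any $v\in C_L(\Delta,G)$ one has $\langle(c^\star,0),v\rangle=\langle c^\star,\pi(v)\rangle=0$ (where $\pi$ is the projection onto the first $q$ coordinates), the extension lies in $C_L(\Delta,G)^\bot$ and has weight $m+2$. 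This is both shorter and rigorous; I recommend replacing your differential construction with it.
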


\begin{proof}
The inequality $\geq$ is a consequence of Theorem \ref{Bound}.
For the converse inequality, consider a rational line $L$ contained in $S$.
After a suitable change of coordinates, one can assume that $P_1, \ldots , P_q\in L$.
Therefore, the punctured code $C^{\star}$ obtained from $C_L (\Delta, G)$ by keeping only the $q$ first coordinates, can be regarded as a code on $L$, that is a a Reed--Solomon code of length $q$ and dimension $m+1$. 
From well--known results on Reed--Solomon codes, its dual has minimum distance $m+2$ and a minimum weight codeword $c\in {C^{\star}}^{\bot}$ extended by zero coordinates yields a codeword in $C_L (\Delta, G)^{\bot}$ with the same weight.
\end{proof}

\subsubsection{Elliptic quadrics}
If $S$ is an elliptic quadric. This time, since it does not contain rational lines, the set
$$
\mathcal{D}:=\{H_S, \ldots , (m+1)H_S\}
$$
satisfies $\mathcal{Q}(\Delta, G, \delta (\mathcal{D}))$.
Indeed, from (\ref{dimdim}), $\dim \Gamma (S, \mathcal{O}_S(m+1))=(m+2)^2$ which is $>2m+1$. Therefore, any $(2m+1)$--tuple of points in $\supp (\Delta)$ is contained in some curve $C\sim aH_S$ with $a\leq m+1$.
Moreover, since $H_S$ generates the Picard Group of $S$, for some $a\leq m+1$ there exists such a curve $C$ which is minimal for this property.
This yields the following bound.

\begin{prop}\label{ellip1}
  The minimum distance  $d^{\bot}$ of $C_L (\Delta, G)^{\bot}$ satisfies
$$
d^{\bot}\geq 2m+2.
$$
\end{prop}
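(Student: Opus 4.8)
The plan is to deduce this directly from Theorem \ref{Bound}, applied to the set of divisor classes
\[
\mathcal{D} := \{H_S, 2H_S, \ldots, (m+1)H_S\},
\]
so that the work splits into two tasks: computing $\delta(\mathcal{D})$ and verifying that $\mathcal{D}$ satisfies the property $\mathcal{Q}(\Delta, G, \delta(\mathcal{D}))$ of Definition \ref{Q()}.

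First I would carry out the intersection computation. By (\ref{canonical}) we have $K_S \sim -2H_S$, and the self-intersection of the hyperplane class on a quadric surface is $H_S^2 = 2$; hence, for $1 \leq a \leq m+1$,
\[
aH_S.(G - K_S - aH_S) = aH_S.((m+2-a)H_S) = 2a(m+2-a).
\]
Viewed as the restriction to $1 \leq a \leq m+1$ of the concave quadratic $2a(m+2-a)$, this is minimized at an endpoint, where it takes the value $2(m+1)$. Thus $\delta(\mathcal{D}) = 2m+2$ (Notation \ref{delta(D)}).

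Next I would check $\mathcal{Q}(\Delta, G, 2m+2)$. The cohomological vanishing $(\mathfrak{V})$ is automatic: each curve of class $aH_S$ is a complete intersection in $\P^3$, $S$ itself is a complete intersection there, and $G \sim mH_S$, so Lemma \ref{complete} gives $H^1(S, \Omega^2(G - aH_S)) = 0$ for every $a$. For the interpolation condition $(\mathfrak{I})$, given any $\tau$-tuple of points of $\supp(\Delta)$ with $\tau \leq 2m+1$, the dimension formula (\ref{dimdim}) gives $\dim \Gamma(S, \mathcal{O}_S(m+1)) = (m+2)^2 > 2m+1 \geq \tau$, so there is a nonzero section of $\mathcal{O}_S(m+1)$ vanishing at all $\tau$ points; its zero scheme is a curve $X$ of some class $aH_S$ with $a \leq m+1$ passing through the tuple. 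If $X$ is not minimal for this property one replaces it by a proper sub-curve still containing the $\tau$-tuple and iterates. Here the elliptic hypothesis is essential: since $\mathrm{Pic}(S) = \Z H_S$, every proper sub-curve has strictly smaller degree, so the procedure terminates at a minimal curve whose class is again $a'H_S$ for some $a' \leq m+1$, hence lies in $\mathcal{D}$. Theorem \ref{Bound} then yields $d^{\bot} \geq \delta(\mathcal{D}) = 2m+2$.

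The only genuinely delicate step is the minimality clause in $(\mathfrak{I})$: one must ensure that shrinking $X$ to a minimal curve keeps its class inside $\mathcal{D}$. This is precisely where Picard number $1$ intervenes, and it is exactly the point at which the analogous argument broke down for a hyperbolic quadric, forcing the enlargement of $\mathcal{D}$ by the line classes $E, F$ in \ref{hyphyp}.
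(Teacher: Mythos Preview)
Your proof is correct and follows essentially the same approach as the paper: choose $\mathcal{D}=\{H_S,\ldots,(m+1)H_S\}$, compute $\delta(\mathcal{D})=2m+2$ from $K_S\sim -2H_S$ and $H_S^2=2$, verify $(\mathfrak{V})$ via Lemma \ref{complete} and $(\mathfrak{I})$ via the dimension count $(m+2)^2>2m+1$ together with $\mathrm{Pic}(S)=\Z H_S$ to guarantee minimality, and then invoke Theorem \ref{Bound}. Your write-up is in fact more explicit than the paper's, particularly in isolating why the Picard-number-$1$ hypothesis is exactly what makes the minimality clause go through.
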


Moreover, using Theorem \ref{impr}, it is possible to improve efficiently this bound for some values of $m$.
For that, we have to estimate $\Theta (mH_S)$ for all $m\leq q-2$ or find an upper bound for it.
For that we use what we know about the Picard group of $S$ together with the bound proved by Aubry and Perret in \cite{aubryperret} Corollary 3.

Let us give some upper bound for $\Theta (mH_S)$ for some particular values of $m$.
\begin{itemize}
\item $\Theta (H_S)=q+1$, indeed it is the maximal number of rational points of a plane section of $S$ which is a plane conic.
\item $\Theta (2H_S)\leq \max (2(q+1), q+1+\lfloor 2\sqrt{q} \rfloor)=2q+2$. Indeed, a quadric section of $S$ is either irreducible and has arithmetical genus $1$ or reducible. If it is reducible, since the Picard group is generated by $H_S$, it is the union of two curves both linearly equivalent to $H_S$ and hence the union of to plane sections (i.e. of two plane conics).
\item $\Theta (3H_S)\leq \max(3(q+1), q+1+4\lfloor 2\sqrt{q} \rfloor)$.
\item etc...
\end{itemize}

\subsubsection{Numerical application}
To conclude this section on quadrics, let us compare the parameters $[n,k,d]$ of the code $C_L (\Delta, G)^{\bot}$ obtained for particular values of $q$.
The following results are obtained using Propositions \ref{hyper}, \ref{ellip1} and the previous estimates for $\Theta (mH_S)$.

\medbreak

\noindent \textbf{Comparison with Best known codes.} In what follows, the minimum distances of the studied codes are compared with the best known minimum distances  for given length and dimension appearing in \url{www.codetables.de} \cite{codetables} and \url{http://mint.sbg.ac.at} \cite{minT}.
These best known minimum distances appear in the right hand column of each array.

\medbreak

\noindent \textbf{For} $\mathbf{q=4}$.
$$
\begin{array}{|c|c|c|c|c|c|}
\hline
  \multirow{4}{*}{m} & \multirow{4}{*}{\textrm{Length}} & \multirow{4}{*}{\textrm{Dimension}} & \multicolumn{2}{|c|}{\textrm{Minimum}} &
    \\
 & & & \multicolumn{2}{|c|}{\textrm{Distance}} & \textrm{Best Known} \\
\cline{4-5}
 & & & \textrm{Hyperbolic} & \ \ \  \textrm{Elliptic}\ \ \  & \textrm{Distance}  \\
 & & & \textrm{Quadric} & \textrm{Quadric} & \\
\hline
1 & 16 & 12 & 3 & \geq 4 & 4 \\
\hline
2 & 16 & 7 & 4 & \geq 6 & 8 \\
\hline
\end{array}
$$

\bigbreak

\noindent \textbf{For} $\mathbf{q=8}$.
$$
\begin{array}{|c|c|c|c|rlc|c|}
\hline
  \multirow{4}{*}{m} & \multirow{4}{*}{\textrm{Length}} & \multirow{4}{*}{\textrm{Dimension}} & \multicolumn{4}{|c|}{\textrm{Minimum}} &
    \\
 & & & \multicolumn{4}{|c|}{\textrm{Distance}} & \textrm{Best Known} \\
\cline{4-7}
 & & & \textrm{Hyperbolic}  & \multicolumn{3}{|c|}{\textrm{Elliptic}} & \textrm{Distance}  \\
 & & & \textrm{Quadric} & \multicolumn{3}{|c|}{\textrm{Quadric}} & \\
\hline
1 & 64 & 60 & 3 & \geq &4& & 4 \\
\hline
2 & 64 & 55 & 4 & \geq & 6& & 6 \\
\hline
3 & 64 & 48 & 5 & \geq & 8& & 11 \\
\hline
4 & 64 & 39 & 6 & \geq & 16& (\ref{a}) & 16 \\
\hline
5 & 64 & 28 & 7 & &24 & (\ref{b}) & 24 \\
\hline
6 & 64 & 15 & 8 & \geq& 32& (\ref{c}) & 38 \\
\hline
\end{array}
$$
 
\bigbreak

\begin{enumerate}[(a)]
\item\label{a} Take $\mathcal{D}:=\{H_S, \ldots , 4H_S\}$.
Since $\Theta (H_S)\leq 9$ and $H_S. (4H_S-K_S-H_S)=10$, we can choose $\mathcal{E}:=\{2H_S, 3H_S, 4H_S\}$.
We have $\delta (\mathcal{E})=16$ and $\mathcal{Q}(\Delta, G, 16)$ is satisfied by $\mathcal{D}$ since $\dim \Gamma (S, \mathcal{O}_S (4))=25> 16$.
Then, apply Theorem \ref{impr}. 
\item\label{b} Take $\mathcal{D}:=\{H_S, \ldots, 4H_S\}$.
We have $\Theta (2H_S)\leq 18$ and $2H_S.(5H_S-K_S-2H_S)=20>18$.
Take $\mathcal{E}:=\{3H_S, 4H_S\}$ and apply Theorem \ref{impr}.
\item\label{c} Take $\mathcal{D}:=\{H_S, \ldots , 5H_S\}$ and $\mathcal{E}:=\{4H_S\}$.
\end{enumerate}

\bigbreak

\noindent {\bf Note on the $[64, 28, 24]$ code over $\F_8$.}
When this article has been submitted, the best $[64,28]$ code over $\F_8$ on Codetables \cite{codetables} and MinT \cite{minT} had minimum distance $23$.
However, in \cite{DuuChe} Table IIA, Duursma and Chen, assert the existence of a $[64, 28, 24]$ code from the Suzuki curve, without providing further details.
After communicating our results to Markus Grassl (from Codetables), he re-constructed our code using \emph{Construction X}, based on two cyclic codes deriving from ours. By this way, he proved by computer that the exact minimum distance is $24$.
More recently, Iwan Duursma communicated to Markus Grassl a {\sc Magma} script to generate their Suzuki code. He also explained how to deduce the minimum distance of their code. The result comes from a {\sc Magma} computation (\cite{DuuChe} \S III.B. for $k=11$) and a duality argument (\cite{bouquindiego} page 26).
Taking these contributions into account, Codetables has been updated.

\medbreak

\noindent \textbf{For} $\mathbf{q=16.}$ We do not apply the result for all the possible values of $m\leq q-2=14$ since the array would be too long.
Let us only give some of them yielding some relevant codes over the elliptic quadric.

$$
\begin{array}{|c|c|c|c|rlc|c|}
\hline
  \multirow{4}{*}{m} & \multirow{4}{*}{\textrm{Length}} & \multirow{4}{*}{\textrm{Dimension}} & \multicolumn{4}{|c|}{\textrm{Minimum}} &
    \\
 & & & \multicolumn{4}{|c|}{\textrm{Distance}} & \textrm{Best Known} \\
\cline{4-7}
 & & &  \textrm{Hyperbolic} & \multicolumn{3}{|c|}{\textrm{Elliptic}} & \textrm{Distance}  \\
 & & & \textrm{Quadric} & \multicolumn{3}{|c|}{\textrm{Quadric}} & \\
\hline
8 & 256 & 175 & 10 & \geq & 32& (\ref{I1}) & 46 \\
\hline
9 & 256 & 156 & 11 & \geq & 48& (\ref{I2}) & 59 \\
\hline
10 & 256 & 135 & 12 & \geq &64& (\ref{I3}) & 74 \\
\hline
\end{array}
$$

\bigbreak

\begin{enumerate}[(a)]
\item\label{I1} Take $\mathcal{D}:=\{H_S, \ldots, 8H_S\}$. Since $\Theta (H_S)\leq 17$ and $H_S. (8H_S-K_S-H_S)=18>17$, one can take $\mathcal{E}:=\{2H_S, \ldots , 8H_S\}$.
\item\label{I2} Take $\mathcal{D}:=\{H_S, \ldots, 8H_S\}$.
Since $\Theta (2H_S)\leq 34$ and $2H_S (9H_S-K_S-2H_S)=36>34$, one can take $\mathcal{E}:=\{3H_S, \ldots , 8H_S\}$.
\item \label{I3}  Take $\mathcal{D}:=\{H_S, \ldots, 8H_S\}$.
Since $\Theta (3H_S)\leq 51$ and $3H_S (10H_S-K_S-3H_S)=54>51$, one can take $\mathcal{E}:=\{4H_S, \ldots , 8H_S\}$.
\end{enumerate}


\subsection{Cubic surfaces in $\P^3$}
The classification of smooth cubic surfaces is far from being as simple as that of smooth quadrics (see \cite{swd}).
However, in terms of codes, it is sufficient to separate them into two \textit{sets}, the cubics which contain rational lines and those which do not.
As in the case of quadrics, we see that the best codes are given by cubics which do not contain rational lines.

\subsubsection{Context}\label{contcub}
The context is almost the same as that of \ref{contqua}.
Let $S$ be smooth cubic surface in $\P^3$, let $G$ be of the form  $mH_S$ where $H_S$ is a hyperplane section and $\Delta$ be the sum of rational points of $S$ lying out of the support of $H_S$.
For the same reason as in Remark \ref{irrel}, we assume that $m\leq q-2$.

If $m\leq q-2$, then the dimension of $C_L(\Delta, G)$  equals that of $\Gamma (S, \mathcal{O}_S(m))$ which is 
\begin{equation}\label{dimcub}
\dim C_L (\Delta, G)=\left(
  \begin{array}{c}
    m+3\\3
  \end{array}
\right)
-
\left(
  \begin{array}{c}
    m\\3
  \end{array}
\right)=\frac{3m^2+3m+2}{2}\cdot
\end{equation}

\begin{rem}
  There exists cubic surfaces which does not contain any rational line for instance, explicit examples are given in \cite{zarzar} and \cite{agctvoloch}.
Moreover, it is proved in \cite{kollar} that such surfaces have Picard number $1$.
\end{rem}

\subsubsection{Cubics containing rational lines}

\begin{prop}\label{withlines}
  In the context described in \ref{contcub}, if $S$ contains rational lines, then the minimum distance $d^{\bot}$ of $C_L (\Delta, G)^{\bot}$ satisfies 
$$
d^{\bot}=m+2.
$$
\end{prop}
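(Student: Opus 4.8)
The plan is to prove this exactly as the cubic analogue of Proposition~\ref{hyper}: establish a lower bound using Theorem~\ref{Bound}, and a matching upper bound by restricting the functional code to a rational line contained in $S$.

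For the lower bound, I would first record the numerical data of a smooth cubic surface $S\subset\P^3$: by adjunction $K_S\sim -H_S$, one has $H_S^2=\deg S=3$, and every line $L\subset S$ is a $(-1)$--curve, so $L.H_S=1$ and $L^2=-1$. Writing $G-K_S-D=(m+1)H_S-D$, this gives $L.((m+1)H_S-L)=m+2$ and $aH_S.((m+1)H_S-aH_S)=3a(m+1-a)$, the latter being $\geq 3m\geq m+2$ for $1\leq a\leq m$ (recall $m\geq1$). I would then take
$$
\mathcal{D}:=\{\,[L]: L\ \text{a rational line on}\ S\,\}\cup\{H_S,2H_S,\ldots,mH_S\},
$$
so that $\delta(\mathcal{D})=m+2$, the minimum being attained on the line classes. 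Condition~$(\mathfrak{V})$ holds for every class in $\mathcal{D}$ by Lemma~\ref{complete}, since $S$ is a complete intersection in $\P^3$, $G\sim mH_S$, and each class in $\mathcal{D}$ is represented by a complete intersection curve in $\P^3$ (a line, or the section of $S$ by a hypersurface of degree at most $m$). Condition~$(\mathfrak{I})$ would be checked as in the quadric case: given at most $m+1$ rational points of the affine chart, if they all lie on a rational line $L\subset S$ take $X:=L$, which is irreducible, hence minimal, with class in $\mathcal{D}$; otherwise, since $\dim\Gamma(S,\mathcal{O}_S(m))=\frac{3m^2+3m+2}{2}$ is well above $m+2$, a general member of a suitable system $|aH_S|$ with $a\leq m$ through these points is irreducible (no rational line is forced as a fixed component once $a$ exceeds the number of the given points on that line), hence minimal. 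Theorem~\ref{Bound} then yields $d^{\bot}\geq\delta(\mathcal{D})=m+2$.

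For the reverse inequality I would argue as in Proposition~\ref{hyper}. By hypothesis $S$ contains a rational line $L$, which we may assume is not contained in $H_S$; since $L.H_S=1$, at most one rational point of $L$ lies on $H_S$, so at least $q$ of them lie in the affine chart, and after a change of coordinates we may assume $P_1,\ldots,P_q\in L$. The restriction $\Gamma(S,\mathcal{O}_S(m))\to\Gamma(L,\mathcal{O}_L(m))$ is surjective (it is a factor of the surjection $H^0(\P^3,\mathcal{O}_{\P^3}(m))\to H^0(L,\mathcal{O}_L(m))$, exactly as in the proof of Lemma~\ref{complete}), so the code obtained from $C_L(\Delta,G)$ by keeping only the first $q$ coordinates is the Reed--Solomon code of length $q$ and dimension $m+1$ given by evaluating polynomials of degree at most $m$ at the $q$ rational points of the affine line $L\setminus(L\cap H_S)$. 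Its dual is a Reed--Solomon code of minimum distance $m+2$, and a minimum-weight word of this dual, extended by $n-q$ zero coordinates, lies in $C_L(\Delta,G)^{\bot}$ and has weight $m+2$. Hence $d^{\bot}\leq m+2$, and combining the two inequalities gives $d^{\bot}=m+2$.

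The step I expect to be the main obstacle is, as in the hyperbolic quadric case, the verification of $(\mathfrak{I})$: one must show that every configuration of at most $m+1$ rational points sits minimally on a curve whose class has been included in $\mathcal{D}$, the delicate configurations being those of collinear points on rational lines of $S$ — which is precisely why those line classes must be adjoined to $\mathcal{D}$, and, being $(-1)$--curves, they are exactly the classes realizing the value $m+2$.
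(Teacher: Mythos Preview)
Your proposal is correct and follows essentially the same approach as the paper's own proof: the paper also takes $\mathcal{D}=\{L_1,\ldots,L_r,H_S,\ldots,mH_S\}$, observes that $\delta(\mathcal{D})=m+2$ is reached by the line classes, invokes Theorem~\ref{Bound} for the lower bound, and refers to ``the very same argument as that of Proposition~\ref{hyper}'' (the Reed--Solomon restriction to a rational line) for the upper bound. Your write-up is in fact more explicit than the paper's --- you compute the intersection numbers, justify $(\mathfrak{V})$ via Lemma~\ref{complete}, and spell out the restriction-to-$L$ argument --- whereas the paper leaves the verification of $(\mathfrak{I})$ to the reader exactly as you flag in your last paragraph.
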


\begin{proof}
Let $L_1, \ldots , L_r$ be all the rational lines contained in $S$.
Set $\mathcal{D}:=\{L_1, \ldots ,$ $L_r, H_S, \ldots , mH_S\}$.
A computation gives $\delta (\mathcal{D})=m+2$ (the minimum is reached by the lines $L_i$).
By the same manner as Proposition \ref{hyper}, the inequality $d^{\bot}\geq m+2$ is given by Theorem \ref{Bound} and the equality is obtained using the very same argument as that of Proposition \ref{hyper}.  
\end{proof}

\subsubsection{Cubics containing no rational lines}
As for elliptic quadrics, we first give a general lower bound based on Theorem \ref{Bound} and then an improvement of it based on Theorem \ref{impr}.

\begin{prop}\label{nolines}
  In the context described in \ref{contcub}, if $S$ does not contain any rational line, then the minimum distance $d^{\bot}$ of $C_L (\Delta, G)^{\bot}$ satisfies
$$
d^{\bot}\geq 3m.
$$
\end{prop}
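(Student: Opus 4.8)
The plan is to follow the pattern of the elliptic-quadric case (Proposition~\ref{ellip1}): pick a suitable family $\mathcal{D}$ of divisor classes on $S$ and apply Theorem~\ref{Bound}. The decisive structural fact is that a smooth cubic containing no rational line has Picard number $1$ (by \cite{kollar}); since $-K_S=H_S$ is a primitive class defined over $\F_q$, this forces $\mathrm{Pic}(S)=\Z H_S$. Now for a smooth cubic $S\subset\P^3$ one has $H_S^2=3$ and, by adjunction, $K_S\sim -H_S$, hence for every $a$
\[
aH_S.(G-K_S-aH_S)=aH_S.\big((m+1-a)H_S\big)=3a(m+1-a).
\]
Viewed as a function of $a\in\{1,\dots,m\}$ this is positive and, being a downward parabola, attains its minimum $3m$ at the two endpoints $a=1$ and $a=m$. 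I would therefore set $\mathcal{D}:=\{H_S,2H_S,\dots,mH_S\}$, so that $\delta(\mathcal{D})=3m$ in the sense of Notation~\ref{delta(D)}.

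It then remains to check that $\mathcal{D}$ satisfies the property $\mathcal{Q}(\Delta,G,3m)$ of Definition~\ref{Q()}. Condition $(\mathfrak{V})$ is immediate from Lemma~\ref{complete}, because $S$ is a hypersurface and each $aH_S$ a hypersurface section of it, so both are complete intersections in $\P^3$, and $G\sim mH_S$. For condition $(\mathfrak{I})$, fix a $\tau$-tuple among $P_1,\dots,P_{q^2}$ with $\tau<3m$. By (\ref{dimcub}), $\dim\Gamma(S,\mathcal{O}_S(m))=\tfrac{3m^2+3m+2}{2}\ge 3m>\tau$ (the inequality $3m^2-3m+2\ge 0$ being automatic), so a nonzero section of $\mathcal{O}_S(m)$ vanishes at all $\tau$ points; taking its zero divisor, passing to the underlying reduced curve, and using $\mathrm{Pic}(S)=\Z H_S$ with $H_S$ ample, one obtains a reduced curve through the $\tau$ points whose class is $aH_S$ for some $1\le a\le m$. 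Among all such reduced curves in $\mathcal{D}$ containing the $\tau$-tuple, pick one, $X$, of least class $aH_S$; then $X$ is minimal for the property ``contains the $\tau$-tuple'', since a reduced curve $X'\subsetneq X$ through all the points would make $X-X'$ a nonzero effective divisor, forcing (again by $\mathrm{Pic}(S)=\Z H_S$) $[X']=a'H_S$ with $a'<a$, against the choice of $a$. Hence $\mathcal{D}$ satisfies $\mathcal{Q}(\Delta,G,\delta(\mathcal{D}))$, and Theorem~\ref{Bound} gives $d^{\bot}\ge\delta(\mathcal{D})=3m$.

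The one genuinely delicate point is the minimality clause inside $(\mathfrak{I})$, and it is precisely there that the hypothesis ``$S$ contains no rational line'' is used: it is what guarantees $\mathrm{Pic}(S)=\Z H_S$ and thereby prevents the interpolating curve from shedding a low-degree component --- morally a rational line through all the chosen points --- which would ruin minimality without improving the weight estimate. Everything else is the intersection computation above together with the dimension count (\ref{dimcub}).
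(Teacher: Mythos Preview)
Your proof is correct and follows essentially the same route as the paper's: both take $\mathcal{D}=\{H_S,\dots,mH_S\}$, compute $\delta(\mathcal{D})=3m$ via $aH_S.(G-K_S-aH_S)=3a(m+1-a)$, verify $(\mathfrak{V})$ through Lemma~\ref{complete} and $(\mathfrak{I})$ through the dimension count~(\ref{dimcub}), and conclude by Theorem~\ref{Bound}. Your treatment of the minimality clause in $(\mathfrak{I})$ is actually more explicit than the paper's --- you spell out why $\mathrm{Pic}(S)=\Z H_S$ forces any proper reduced subcurve to lie in a strictly smaller class $a'H_S$, whereas the paper simply asserts that a minimal interpolating curve exists --- but the underlying reasoning is the same.
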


\begin{proof}
  Set $ \mathcal{D}:=\{H_S, \ldots , mH_S\}$.
We get $\delta (\mathcal{D})= 3m$.
Using (\ref{dimcub}), one proves easily that $\dim \Gamma (S, \mathcal{O}_S(m))\geq 3m-1$ for all $m$ and hence, for all $r<3m$, any $r$--tuple of rational points of $S$ is interpolable by some surface section of $S$ of degree $\leq m$ and one of them is minimal for this property.
Thus, the result is a consequence of Theorem \ref{Bound}.
\end{proof}

  It is easy to compare Propositions \ref{withlines} and \ref{nolines} and see that, as in the case of quadrics, cubics containing no rational lines yield much better codes.
In what follows, we treat numerical examples based on a cubic with no rational lines and see how to use Proposition \ref{nolines} and how to improve its result in some situations using Theorem \ref{impr}.

\subsubsection{Numerical application}
In \cite{zarzar}, the author looked at surfaces with Picard number $1$ to get good functional codes $C_L (\Delta, G)$.
For that, he noticed that in the classification of cubic surfaces up to isomorphism given by Swinnerton--Dyer in \cite{swd} table 1, there exists cubic surfaces which do not contain rational lines and have $q^2+2q+1$ rational points.
Some explicit examples of such surfaces are given in \cite{zarzar} and \cite{agctvoloch}.
The following array gives the parameters of codes arising from such a surface over $\F_9$.

\medbreak

$$
\begin{array}{|c|c|c|rlc|c|}
\hline
  \multirow{2}{*}{m} & \multirow{2}{*}{\textrm{Length}} & \multirow{2}{*}{\rm Dimension} & \multicolumn{3}{|c|}{\textrm{Minimum}} & \textrm{Best Known} \\ 
 & & & \multicolumn{3}{|c|}{\textrm{Distance}} &  \textrm{Distance} \\
\hline
2 & 100 & 90 & \geq & 6 & & 6 \\
\hline
3 & 100 & 81 & \geq & 9 & & 10 \\
\hline
4 & 100 & 69 & \geq & 12 & & 16 \\
\hline
6 & 100 & 36 & \geq & 30 & (\star) & 40 \\
\hline
\end{array}
$$

\bigbreak

The box marked with a ($\star$) corresponds to one where one can apply the improvement given by Theorem \ref{impr}.
Indeed, $\Theta (H_S)\leq 9+1+2\sqrt{9}=16$.

In the same way, using such an improvement, over $\F_8$, with $m=5$ one can get a $[81, 35, 24]$--code.

\subsection{Comment and conclusion}
Looking at the results given in \cite{aubryquad} and \cite{fred2} it is clear that codes of the form $C_L(\Delta, H_S)$ and $C_L(\Delta, 2H_S)$ on elliptic quadrics are much better than codes on hyperbolic ones.
Such a fact holds probably for codes $C_L (\Delta, G)$ on a quadric for more general divisors $G$.

The previous result shows that elliptic quadrics yield also better codes of the form $C_L (\Delta, G)^{\bot}$ that hyperbolic ones.
In both cases, the weakness of hyperbolic quadrics comes from the numerous rational lines they contain.
This fact can be related to the work of Zarzar who noticed in \cite{zarzar} that one could find good codes of the form $C_L (\Delta, G)$ on surfaces having a small Picard Number.
This is well illustrated by quadrics, since hyperbolic quadrics have Picard number $2$ and elliptic ones have Picard number $1$.

Moreover, the principle asserting that surfaces with a small Picard number yield good codes seems to hold for codes of the form $C_L (\Delta, G)^{\bot}$.
At least, the above examples on quadrics and cubic surfaces encourage to look in this direction.
Another explanation makes feel that such surfaces should give good codes:
basically, if the Picard number is small, the set of divisor classes $\mathcal{D}$ of Theorem \ref{Bound} may be small and yield a larger candidate $\delta (\mathcal{D})$ for a lower bound of the minimum distance of $C_L (\Delta, G)^{\bot}$.

Finally, surfaces with small Picard number are twice interesting for coding theory, either for functional codes or for their duals.

\section*{Acknowledgements}
The author wishes to thank Tom H\o holdt and Felipe Voloch for inspiring discussions and Marc Perret for many relevant suggestions on this article.
A computer aided analysis on one of our codes has been done by Markus Grassl who should by the way be congratulated his involvement in Codetables.
Finally, the author expresses his gratitude to Iwan Duursma for some very interesting conversations.

\bibliographystyle{abbrv}
\bibliography{biblio}

\end{document}